\documentclass[12pt]{amsart}

\usepackage[
  letterpaper,
  textwidth=6.2in,
  textheight=8.6in,
  hcentering,
  vcentering,
  headheight=14pt,
  headsep=0.2in
]{geometry}

\usepackage{amssymb}
\usepackage{mathtools}
\usepackage{mathrsfs}

\usepackage{graphicx}
\usepackage{epstopdf}
\usepackage{xcolor}

\usepackage{scalerel}
\usepackage{stackengine}

\usepackage{tikz}
\usepackage{tikz-cd}
\usepackage[all]{xy}

\xyoption{matrix}
\xyoption{arrow}

\usetikzlibrary{
  arrows,
  arrows.meta,
  decorations.pathmorphing,
  backgrounds,
  positioning,
  fit,
  petri
}

\tikzset{
  help lines/.style={
    step=#1cm,
    very thin,
    color=gray
  },
  help lines/.default=.5,
  thick grid/.style={
    step=#1cm,
    thick,
    color=gray
  },
  thick grid/.default=1
}

\usepackage{hyperref}

\hypersetup{
  colorlinks=true,
  citecolor=blue,
  linkcolor=black,
  urlcolor=blue
}

\newtheorem{theorem}{Theorem}[section]
\newtheorem{lemma}[theorem]{Lemma}
\newtheorem{corollary}[theorem]{Corollary}
\newtheorem{proposition}[theorem]{Proposition}

\theoremstyle{definition}
\newtheorem{definition}[theorem]{Definition}
\newtheorem{example}[theorem]{Example}

\theoremstyle{remark}
\newtheorem{remark}[theorem]{Remark}
\numberwithin{equation}{section}
\newtheoremstyle{case}{}{}{}{}{}{:}{ }{}
\theoremstyle{case}

\DeclareMathOperator{\Hom}{Hom}

\newcommand{\field}[1]{\mathbb{#1}}
\newcommand{\ZZ}{\ensuremath{{\field{Z}}}}

\newcommand{\NN}{\ensuremath{{\field{N}}}}

\newcommand{\commentout}[1]{}

\newcommand{\cC}{\ensuremath{{\mathcal{C}}}}

\newcommand{\cN}{\ensuremath{{\mathcal{N}}}}
\newcommand{\cP}{\ensuremath{{\mathcal{P}}}}

\begin{document}
\title{Finite Quotients of Right-angled Artin Groups}
\author{Zihao Liu}
\address{\itshape Zihao Liu, Department of Mathematics, Rice University, Houston, TX, 77005}
\email{zihaoliu@rice.edu}

\subjclass[2020]{Primary 20F36; Secondary 20E18, 20E26.}

\keywords{Artin groups, right-angled Artin groups, profinite completions, pro-$p$ completions, profinite rigidity}

\begin{abstract}
We prove that right-angled Artin groups (RAAGs) are profinitely rigid relative to the class of Artin groups. More precisely, if $A_\Gamma$ is an arbitrary Artin group and $A_\Lambda$ is a right-angled Artin group such that their profinite completions are isomorphic, i.e.,
$\widehat{A_\Gamma}\cong\widehat{A_\Lambda},$
then $\Gamma$ is right-angled and $\Gamma\cong\Lambda$ as labeled graphs. The proof combines the structure of pro-$p$ completions of Artin groups with finite Coxeter quotients to recover the defining graph.
\end{abstract}

\maketitle
\tableofcontents
\section{Introduction}

To what extent is a finitely generated group determined by its finite quotients? This question has attracted considerable attention in recent years, drawing on ideas from geometric group theory, low-dimensional topology, arithmetic, and geometry. For a finitely generated group $G$, denote by $\cC(G)$ the set of isomorphism classes of finite quotients of $G$. Two groups $G$ and $H$ are said to have the same finite quotients if $\cC(G)=\cC(H)$.
It is by now a standard fact that, for finitely generated groups, having the same finite quotients is equivalent to an isomorphism $\widehat G\cong\widehat H$ of their profinite completions (see Dixon--Formanek--Poland--Ribes~\cite{DFPR82}). We shall therefore pass freely between these two formulations.

Let $\cP$ be a class of finitely generated groups. A group $G\in\cP$ is said to be \emph{profinitely rigid relative to $\cP$} if, for every $H\in\cP$, one has $G\cong H$ whenever $\cC(G)=\cC(H)$. A finitely generated residually finite group is called \emph{(absolutely) profinitely rigid} if it is profinitely rigid relative to the class of all finitely generated residually finite groups.

In this paper we study profinite rigidity for right-angled Artin groups (RAAGs) within the larger class of Artin groups. RAAGs form one of the fundamental classes of groups in geometric group theory. They interpolate between finitely generated free groups and finitely generated free abelian groups, and are precisely the Artin groups whose defining graphs have all edge labels equal to $2$.

A theorem of Kropholler--Wilkes~\cite{KW16} shows that RAAGs are already rigid from a much smaller collection of finite quotients: for every prime number $p$, two right-angled Artin groups with isomorphic pro-$p$ completions, equivalently, with the same finite $p$-group quotients up
to isomorphism, have isomorphic defining graphs. Thus, once two groups are known to be RAAGs, a single pro-$p$ completion determines their defining graphs. The problem considered here is therefore not rigidity within the class of RAAGs, but whether right-angledness itself can be recognized among all Artin groups.

(Absolute) profinite rigidity fails in general for RAAGs. Indeed, the classical construction of Platonov--Tavgen'~\cite{PT86} gives a finitely generated but not finitely presented subgroup $P<F_4\times F_4$ for which the inclusion induces an isomorphism $\widehat P\cong\widehat{F_4\times F_4}.$ In particular, $P\not\cong F_4\times F_4$. Since $F_4\times F_4$ is the RAAG associated to the complete bipartite graph $K_{4,4}$, this shows that RAAGs need not be (absolutely) profinitely rigid. This makes relative profinite rigidity within the class of Artin groups the natural formulation of the problem.

There is a closely analogous phenomenon for Coxeter groups.
Kropholler--Wilkes proved that right-angled Coxeter groups are distinguished from one another by their pro-$2$
completions~\cite{KW16}, while more recently
Corson--Hughes--M\"oller--Varghese proved that every right-angled Coxeter group is profinitely rigid relative to the class of all Coxeter groups~\cite{CHMV26} . These results suggest the corresponding
relative rigidity problem for RAAGs.

Although residual finiteness is not known for arbitrary Artin groups, this does not prevent one from studying their finite quotients. That there are nontrivial finite quotients can be seen by sending every standard generator to $1$, which defines an epimorphism from any nontrivial Artin group onto $\ZZ$, and hence onto $\ZZ/n\ZZ$ for every $n\geq2$. We therefore formulate the problem directly in terms of finite quotients, or equivalently, since Artin groups are finitely generated, in terms of their profinite completions.

Our main problem is the following: let $A_\Lambda$ be a RAAG and let $A_\Gamma$ be an arbitrary Artin group. If
\[
\widehat{A_\Gamma}\cong\widehat{A_\Lambda},
\]
must $\Gamma$ itself be right-angled? We answer this question affirmatively and, in fact, recover the entire defining graph.

\begin{theorem}\label{thm:main}
Let $A_\Gamma$ be an Artin group and let $A_\Lambda$ be a
right-angled Artin group. If
\[
\widehat{A_\Gamma}\cong\widehat{A_\Lambda},
\]
then $\Gamma$ is right-angled and $\Gamma\cong\Lambda$ as labeled graphs. In particular, $A_\Gamma\cong A_\Lambda$. Thus right-angled Artin groups are profinitely rigid relative to the class of Artin groups.
\end{theorem}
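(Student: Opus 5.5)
Our plan is to extract from $\widehat{A_\Gamma}\cong\widehat{A_\Lambda}$ enough invariants to show first that every edge label of $\Gamma$ is $2$, and then to conclude by Kropholler--Wilkes~\cite{KW16}. The basic observation is that the isomorphism of profinite completions induces isomorphisms of the pro-$p$ completions $(A_\Gamma)^{\wedge}_p\cong(A_\Lambda)^{\wedge}_p$ for every prime $p$, since these are the maximal pro-$p$ quotients. It also induces a bijection between finite quotients, so the two groups have the same finite Coxeter-type quotients. Once $\Gamma$ is known to be right-angled, \cite{KW16} applied to a single prime gives $\Gamma\cong\Lambda$ and hence $A_\Gamma\cong A_\Lambda$. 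The whole problem therefore reduces to showing that $\Gamma$ has no edge with label $m\ge3$.

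\textbf{Abelianization and vertex count.} We first compare abelianizations, which are recovered from the profinite completions via $\widehat{G}^{ab}\cong\widehat{G^{ab}}$ for finitely generated $G$. We have $A_\Lambda^{ab}\cong\ZZ^{|V(\Lambda)|}$, while $A_\Gamma^{ab}\cong\ZZ^{c}$, where $c$ is the number of components of the graph $\Gamma_{\mathrm{odd}}$ whose edges are those with odd label. Hence $c=|V(\Lambda)|$. Next we count edges using $H_2$ of the pro-$p$ completion, or equivalently the number of relations in a minimal pro-$p$ presentation. For a RAAG this number is $|E(\Lambda)|$, since RAAGs are residually $p$ and their pro-$p$ completions are well behaved. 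For $A_\Gamma$ we would compute $\dim H^1$ and $\dim H^2$ of $(A_\Gamma)^{\wedge}_p$ with $\mathbb F_p$ coefficients.
\begin{itemize}
\item An odd-labelled edge identifies its two generators in every $p$-group quotient: the relation forces them to be conjugate, and in a $p$-group this collapses to equality modulo the Frattini subgroup.
\item An even edge with label $m=2k$ contributes a relation $(ab)^k=(ba)^k$, which lies in the appropriate lower $p$-central term.
\end{itemize}

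\textbf{Detecting labels $m\ge3$.} To rule out odd labels, we use $\mathbb F_p$-cup products, or the structure of the graded Lie algebra of the lower $p$-central series. For a RAAG, $H^*((A_\Lambda)^{\wedge}_p;\mathbb F_p)$ is the exterior face ring of the flag complex of $\Lambda$, and the cup product $H^1\times H^1\to H^2$ has kernel determined exactly by the non-edges. For $A_\Gamma$, we first pass to the quotient graph obtained by collapsing odd components, with edges labelled by the gcd-type data coming from even labels. We then show that an even label $2k$ with $k\ge2$ produces a relation of higher lower-central degree. Choosing $p$ dividing $k$, or choosing $p$ not dividing $k$ as needed, this relation changes the quadratic part of the cohomology ring, namely it kills a cup product that a commuting relation would not. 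Matching the $\mathbb F_p$ quadratic algebras for all $p$ would then force all labels to be $2$, once the odd labels are absent.

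Odd labels are to be excluded by finite Coxeter quotients. Each finite quotient of $A_\Lambda$ in which the generators map to involutions is a quotient of the right-angled Coxeter group $W_\Lambda$. Consider the finite quotient $A_\Gamma\to W_\Gamma\to$ the dihedral group $D_m$ attached to an odd edge; this is possible since odd-edge generators map to involutions of order $2$ with product of order $m$. Transporting this quotient through the isomorphism gives a quotient of $A_\Lambda$ onto $D_m$ generated by images with specific properties. Combining this with the abelianization count then yields a contradiction, using that a RAAG surjects onto $D_m$ only in ways constrained by its $2$-quotients.

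I expect this last step to be the main obstacle. The information must be transported intrinsically, since the profinite isomorphism need not send generators to generators, so we can only use quantities invariant under $\mathrm{Aut}(\widehat{A_\Lambda})$. The strategy I would try first is to count epimorphisms to dihedral groups $D_m$ and to the groups $\ZZ/2\times\ZZ/2$, since such counts are profinite invariants. On the Artin side these counts can be computed from $\Gamma$, because maps to $D_m$ with generators sent to reflections are controlled by the labels. On the RAAG side they can be computed from $\Lambda$. Comparing the two counts should produce the contradiction.
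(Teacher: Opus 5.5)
\textbf{The gap is in excluding odd labels.} You flag this step yourself as the main obstacle, and none of the mechanisms you propose for it works.

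First, the quotient $A_\Gamma\to W_\Gamma\to D_m$ attached to an odd edge need not exist. Knowing that $uv$ has order $m$ in $W_\Gamma$ only shows that $\langle u,v\rangle\cong D_m$ is a \emph{subgroup}, not a quotient. Coxeter groups do not retract onto standard parabolic subgroups when odd edges leave them: killing a neighbour across an odd edge kills the generator too. For the triangle with labels $3,5,2$, one has $W_\Gamma\cong H_3\cong A_5\times\ZZ/2$, whose only quotients are $1$, $\ZZ/2$, $A_5$ and $A_5\times\ZZ/2$. So neither odd edge gives a dihedral quotient.

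Second, even when such a quotient exists, its existence carries no information across an isomorphism that, as you note, need not respect generators. As soon as $\Lambda$ has two non-adjacent vertices, $A_\Lambda$ retracts onto $F_2$ and hence surjects onto every $D_m$.

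Third, the counting plan has no mechanism producing a strict inequality. Counts into $\ZZ/2\times\ZZ/2$, or into any abelian group, only see the abelianizations, which already agree. Also, $|\Hom(A_\Gamma,D_m)|$ is not controlled by the labels, since homomorphisms may send standard generators to rotations rather than reflections.

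\textbf{The missing idea.} You need a homomorphism that makes the comparison of Hom sets automatic, so that nothing has to be computed. Collapsing each odd component to one generator gives a canonical epimorphism $\pi\colon A_\Gamma\twoheadrightarrow A_{\Gamma'}$. Here $\Gamma'$ is the right-angled graph on the odd components, with two components adjacent when some edge of $\Gamma$ joins them; this is the paper's $\Gamma_{\mathrm{odd}}$, not your subgraph of odd edges.

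Your pro-$p$ analysis can supply $\Gamma'\cong\Lambda$ while odd edges are still present. Take a prime $q$ dividing none of the gcd's of half-labels between adjacent components. Then $\Gamma_q$ is right-angled with underlying graph $\Gamma'$, so Theorems~\ref{thm:ELMP-pro-p} and~\ref{thm:KW-pro-p-rigidity} give $\Gamma'\cong\Lambda$.

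Now fix a finite group $Q$. The pullback $\pi^*\colon\Hom(A_{\Gamma'},Q)\to\Hom(A_\Gamma,Q)$ is injective because $\pi$ is onto. By Proposition~\ref{prop:hom-finite-quotients} the two finite sets have the same size, so $\pi^*$ is a bijection. Hence every homomorphism from $A_\Gamma$ to a finite group identifies the endpoints $u,v$ of each odd edge. But $u\neq v$ in $W_\Gamma$, and $W_\Gamma$ is residually finite. So some finite quotient of $W_\Gamma$ separates $u$ and $v$, which is a contradiction; this quotient is arbitrary, not dihedral. This is Lemma~\ref{lem:finite-quotient-obstruction} together with Proposition~\ref{prop:no-odd-edges}.

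\textbf{The even labels.} Your cup-product treatment can stand as an alternative to the paper's argument. The paper embeds the retract $\widehat{D_{p^t}}^{(p)}$, which is two-generated, nonabelian, and has nontrivial center, and then applies Theorem~\ref{thm:two-generated-pro-p-raag}; this works at a single prime. Your route uses all primes at once. An edge of half-label $k$ gives a nonzero mod-$p$ cup product exactly when $p\nmid k$. So the dimension of the span of cup products in $H^2$ would vary with $p$, whereas for $A_\Lambda$ it is always $|E(\Lambda)|$. You would still need to justify two things: that $\widehat{A_\Gamma}^{(p)}$ is the completion of the collapsed graph, and the duality between cup products and the quadratic parts of the relators.
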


Note also that, within the class of Artin groups, Nyberg--Brodda
recently proved that the braid groups $B_n$ are not absolutely
profinitely rigid for $n\geq4$~\cite{NB26}.

\medskip

We now describe the main ingredients in the proof. For a group $G$ and a prime number $p$, we denote by $\widehat{G}^{(p)}$ its pro-$p$ completion, namely the inverse limit of its finite $p$-group quotients. A recent theorem of Escart\'in Ferrer--Leoni--Mart\'inez P\'erez associates to every Artin graph $\Gamma$ and every prime number $p$ an even labeled graph $\Gamma_p$, which we call the $p$-part of $\Gamma$, and proves that
\[
\widehat{A_\Gamma}^{(p)}
\cong
\widehat{A_{\Gamma_p}}^{(p)}.
\]
See Section~\ref{sec:preliminaries} for the precise definition of $\Gamma_p$. Roughly speaking, $\Gamma_p$ is obtained by collapsing
the components generated by odd-labeled edges, replacing an even label $2kp^t$, where $\gcd(k,p)=1$, by $2p^t$, and retaining the smallest label when parallel edges arise~\cite{ELMP26}.

Our first step is to show that right-angledness of the $p$-part is itself detected by the pro-$p$ completion. Suppose that $\Gamma_p$ contains an edge labeled $2p^t$ with $t\geq1$. The corresponding rank-two standard parabolic subgroup is the dihedral Artin group
\[
D_{p^t}
=
\langle a,b \mid (ab)^{p^t}=(ba)^{p^t}\rangle.
\]
We show that $\widehat{D_{p^t}}^{(p)}$ is nonabelian, has nontrivial center, and is topologically generated by the images of $a$ and $b$; that is, these two elements generate a dense subgroup. Since standard parabolic subgroups of even Artin groups are retracts,
its pro-$p$ completion embeds as a closed subgroup of the relevant ambient pro-$p$ completion. On the other hand, Casals-Ruiz--Pintonello--Zalesskii proved that every closed subgroup of a pro-$p$ RAAG that is topologically generated by two elements is either free pro-$p$ or free abelian
pro-$p$~\cite{CPZ25}. The subgroup above can be neither, resulting in a contradiction. Once $\Gamma_p$ is known to be right-angled, the theorem of Kropholler--Wilkes~\cite{KW16} recovers its defining graph.

In particular, if $A_\Lambda$ is a RAAG, then for every prime
$p$, $\widehat{A_\Gamma}^{(p)}\cong\widehat{A_\Lambda}^{(p)}$ if and only if $\Gamma_p\cong\Lambda$
as labeled graphs. Thus, the pro-$p$ completion
detects exactly whether the $p$-part of an Artin graph is the given right-angled graph.

We briefly indicate how this leads to Theorem~\ref{thm:main}. Suppose that
\[
\widehat{A_\Gamma}\cong\widehat{A_\Lambda},
\]
where $A_\Lambda$ is right-angled. Passing to maximal pro-$p$ quotients and applying the preceding result gives
\[
\Gamma_p\cong\Lambda
\]
for every prime $p$. However, the construction of $\Gamma_p$ collapses the components generated by odd-labeled edges, so this pro-$p$ information alone does not rule out the presence of odd labels in $\Gamma$.

The remaining step uses genuinely profinite information. There is a canonical quotient associated to the odd-collapse of $\Gamma$; see
Section~\ref{sec:detecting-odd-edges} for its precise definition and basic properties.  Under the hypothesis of Theorem~\ref{thm:main}, this odd-collapse
quotient has the same profinite completion as $A_\Gamma$. A general factorization lemma then implies that every homomorphism from $A_\Gamma$ to a finite group factors through this quotient. On the other hand, $A_\Gamma$ admits a canonical epimorphism onto its
associated Coxeter group $W_\Gamma$, and residual finiteness of Coxeter
groups then provides finite quotients separating the endpoints of every
odd-labeled edge. This contradiction shows that $\Gamma$ has no odd-labeled edges. Once $\Gamma$ is even, the fixed-prime recognition
result above forces every edge label to be $2$, and therefore $\Gamma\cong\Lambda.$

Along the way, we also determine the information detected
simultaneously by all pro-$p$ completions relative to a RAAG. The result is expressed in terms of the odd-collapse graph and the arithmetic data associated with the even-labeled edges between odd components; see Section~\ref{sec:detecting-odd-edges} for the precise statement.

The fixed-prime result suggests a broader reconstruction problem: given arbitrary Artin groups $A_\Gamma$ and $A_\Delta$, does an
isomorphism
\[
\widehat{A_\Gamma}^{(p)}
\cong
\widehat{A_\Delta}^{(p)}
\]
necessarily imply
\[
\Gamma_p\cong\Delta_p?
\]
The results of the present paper answer this question when one of the
two $p$-parts is right-angled.

The paper is organized as follows. In Section~\ref{sec:preliminaries} we recall background on Artin and Coxeter groups, profinite and pro-$p$ completions, and
the rigidity results used later. In Section~\ref{sec:recognition-p-parts} we establish the rank-two pro-$p$ obstruction and the fixed-prime recognition theorem.
In Section~\ref{sec:detecting-odd-edges} we study odd components and the information detected by the collection of all pro-$p$ completions, and prove the finite-quotient obstruction needed to detect odd labels. In
Section~\ref{sec:relative-profinite-rigidity} we complete the proof of Theorem~\ref{thm:main}.

\medskip

\noindent\textbf{Role of AI.}
During the preparation of this manuscript, the author used OpenAI's ChatGPT (GPT-5.6) as an assistive tool for language editing, organization and presentation, literature searches, translation, and related aspects of manuscript preparation. The author reviewed all resulting material and takes full responsibility for the mathematical content and the final manuscript.

\medskip

\noindent\textbf{Acknowledgments.}
The author would like to thank his Ph.D. advisor, Alan Reid, for many helpful discussions, encouragement, and careful feedback on the proofs and on numerous versions of this manuscript throughout the entire process of writing this paper. The author is also grateful to Hongbin Sun for helpful comments on earlier drafts.

\section{Preliminaries}\label{sec:preliminaries}
\subsection{Artin groups and Coxeter groups}
Let $\Gamma=(V(\Gamma),E(\Gamma))$ be a finite simplicial graph with each of its edges labelled by an integer $\ge 2$. Denote by $\{s_i,s_j\}\in E$ an edge spanned by $s_i$ and $s_j$. The
\emph{Artin group with defining graph $\Gamma$}, denoted by $A_{\Gamma}$, is given by the following presentation:
\[
\langle s_i\in V
\ |\
\underbrace{s_is_js_i\cdots}_{m_{ij}}=\underbrace{s_js_is_j\cdots}_{m_{ij}}
\text{for each $\{s_i,s_j\}\in E(\Gamma)$ labelled by $m_{ij}$}
\rangle.
\]

We say that $A_\Gamma$ is \emph{right-angled} if $m_{ij}=2$ for every $\{s_i,s_j\}\in E(\Gamma)$, and \emph{even} if every edge label is even.
\begin{example}
The following are standard examples of Artin groups.
\begin{enumerate}
\item If $\Gamma$ has no edges, then $A_{\Gamma}$ is the free group on the vertex set $V(\Gamma)$.
\item If $\Gamma$ is complete and every edge has label $2$, then $A_\Gamma$ is the free abelian group of rank $|V(\Gamma)|$.
\item Let $n\geq 2$, and let $\Gamma$ be the complete graph on the
vertex set $\{s_1,\ldots,s_{n-1}\}$, with edge labels
$$
m_{ij}
=
\begin{cases}
3, & |i-j|=1,\\
2, & |i-j|\geq 2.
\end{cases}
$$
Then
\[
A_\Gamma
=
\left\langle
s_1,\ldots,s_{n-1}
\ \middle|\
\begin{array}{ll}
s_is_{i+1}s_i=s_{i+1}s_is_{i+1},
    & 1\leq i\leq n-2,\\
s_is_j=s_js_i,
    & |i-j|\geq 2
\end{array}
\right\rangle,
\]
which is the braid group $B_n$ on $n$ strands.

\item Let $\Gamma$ consist of two vertices $a$ and $b$ joined by a
single edge labeled $m\geq 2$. The associated Artin group is the
\emph{dihedral Artin group} of type $I_2(m)$, given by
$$
A_{I_2(m)}
=
\langle
a,b
\ |\
\underbrace{aba\cdots}_m
=
\underbrace{bab\cdots}_m
\rangle.
$$
Equivalently, if $m=2k$ is even, then
$$
A_{I_2(2k)}
=
\langle a,b \ | \ (ab)^k=(ba)^k\rangle,
$$
whereas if $m=2k+1$ is odd, then
$$
A_{I_2(2k+1)}
=
\langle a,b\ | \ (ab)^ka=(ba)^kb\rangle.
$$
\end{enumerate}
\end{example}

Given an Artin group $A_\Gamma$, its \emph{associated Coxeter group} is the quotient by the subgroup normally generated by the squares of generators of $A_{\Gamma}$.
Equivalently, $W_\Gamma$ has the presentation
$$
W_\Gamma
=
\langle
s_i\in V(\Gamma)
\ |\
s_i^2=1,\ 
\underbrace{s_is_js_i\cdots}_{m_{ij}}
=
\underbrace{s_js_is_j\cdots}_{m_{ij}}
\text{ for each $\{s_i,s_j\}\in E(\Gamma)$ labelled by $m_{ij}$}
\rangle.
$$
Since the generators are involutions, the Artin relation associated
with an edge labeled $m_{ij}$ is equivalent to
$(s_is_j)^{m_{ij}}=1$. We say that $W_\Gamma$ is
\emph{right-angled} if every edge of $\Gamma$ has label $2$.

\begin{example}
The associated Coxeter group of the braid group $B_n$ is the symmetric
group $S_n$, under the quotient map
$\sigma_i\mapsto(i\ \ i+1)$. If $\Gamma$ consists of two vertices
joined by an edge labeled $m\geq 2$, then
\[
W_\Gamma
=
\langle a,b\mid a^2=b^2=1,\ (ab)^m=1\rangle
\]
is the dihedral group of order $2m$.
\end{example}

Let $A_\Gamma$ be an Artin group with vertex set $V$. Given a subset $X\subseteq V$ , we define the \emph{standard parabolic subgroup} $A_X$ as the subgroup $\langle X\rangle\subseteq A_{\Gamma}$. By \cite{VdL83}, \cite{Paris97}, this is well defined since $A_X$ is canonically isomorphic to the Artin group defined via the full subgraph spanned by $X$. Any conjugate of a standard parabolic subgroup is called \emph{parabolic}.

\begin{lemma}\label{lem:standard-retract-even}
Let $A_\Gamma$ be an even Artin group and $A_X$ be a standard parabolic subgroup where $X\subseteq V(\Gamma)$. Then the map $p_X:A_{\Gamma}\to A_X$ induced by $p_X(v)=v$ for all $v\in X$ and $p_X(w)=1$ for $w\in V(\Gamma)\setminus X$ is a well-defined homomorphism and retraction, i.e., $p_X|_{A_X}=\operatorname{id}_{A_X}$.
\end{lemma}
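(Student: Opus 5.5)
To prove the lemma, I will use von Dyck's theorem: $p_X$ is defined on generators, so it is a well-defined homomorphism exactly when every defining relator of $A_\Gamma$ maps to the identity in $A_X$. The relators are indexed by edges $\{s_i,s_j\}\in E(\Gamma)$ with even label $m_{ij}=2k$, and each has the form $(s_is_j)^k=(s_js_i)^k$. I will split into three cases according to how many endpoints of the edge lie in $X$.

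First, suppose both $s_i,s_j\in X$. The image of the relation is the relation $(s_is_j)^k=(s_js_i)^k$ itself, read in $A_X$. This holds because $A_X$ is canonically isomorphic to the Artin group on the full subgraph spanned by $X$, as cited from \cite{VdL83,Paris97}. That full subgraph contains the edge $\{s_i,s_j\}$ with the same label, so this relation is among its defining relations.

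Second, suppose neither endpoint lies in $X$. Then both sides map to $1$ and there is nothing to check.

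Third, suppose exactly one endpoint lies in $X$, say $s_i\in X$ and $s_j\notin X$. The left side maps to $(s_i\cdot 1)^k=s_i^k$, and the right side maps to $(1\cdot s_i)^k=s_i^k$, so the relation holds. This is the one place where evenness is used. For an odd label $2k+1$ the relation $(s_is_j)^ks_i=(s_js_i)^ks_j$ would map to $s_i^{k+1}=s_i^k$, forcing $s_i=1$, so the corresponding map would not be a retraction.

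Once $p_X$ is known to be a homomorphism, the retraction property follows directly. The restriction $p_X|_{A_X}$ fixes each generator $v\in X$ of $A_X=\langle X\rangle$, and a homomorphism fixing a generating set is the identity, so $p_X|_{A_X}=\operatorname{id}_{A_X}$. No step here is a real obstacle. The only subtle point is the first case, which relies on identifying $\langle X\rangle$ with the abstract Artin group on the full subgraph spanned by $X$. That identification is the cited result, and it also guarantees the target $A_X$ is the intended group.
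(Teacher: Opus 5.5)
Your proof is correct and follows the paper's argument essentially verbatim: the same three-case check of the even Artin relations, followed by the observation that $p_X$ fixes the generators of $A_X$. One nuance is that the paper reads $A_X$ as the abstract Artin group on the full subgraph and deduces injectivity of the inclusion $\iota_X\colon A_X\to A_\Gamma$ from $p_X\circ\iota_X=\operatorname{id}$, so in the even case your appeal to \cite{VdL83,Paris97} in the first case is not actually needed.
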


\begin{proof}
We first check that $p_X$ preserves
the Artin relations. Let $\{s_i,s_j\}\in E(\Gamma)$ be labeled
by $m_{ij}$. Since $A_\Gamma$ is even, $m_{ij}=2k$ for some $k\ge 1$, and the corresponding relation becomes
\[
(s_is_j)^k=(s_js_i)^k.
\]

If both $s_i, s_j\in X$, then this is one of the
defining relations of $A_X$. If $s_i,s_j\in V(\Gamma)\setminus X$, we have that $p_X(s_i)=p_X(s_j)=1$. Finally, suppose that there is exactly one of the two generators in $X$, say $s_i\in X$ and
$s_j\notin X$. Then we have
\[
p_X((s_is_j)^k)=s_i^k=p_X((s_js_i)^k).
\]
Thus every defining relation of $A_\Gamma$ is preserved, so $p_X$ is
a well-defined homomorphism.

Since $p_X$ fixes every generator in $X$, we have
$p_X\circ\iota_X=\operatorname{id}_{A_X}$ where $\iota_X:A_X\hookrightarrow A_{\Gamma}$ is the inclusion. It follows that
$\iota_X$ is injective and its image is a retract of $A_\Gamma$.
\end{proof}

\medskip

A basic question is whether $A_\Gamma\cong A_\Delta$ implies that the defining labeled graphs $\Gamma$ and $\Delta$ are isomorphic. This is known for RAAGs by a theorem of Droms~\cite{Droms87}, and is conjectured for arbitrary even Artin groups. Blasco-García and París established the conjecture ~\cite{BGP22} when every finite edge label belongs to
$\{2c\}\cup\{2d^r \ | \ r\geq 1\}$ for some integers $c\geq 1$ and $d\geq 2$ with $\gcd(c,d)=1$.

To put this paper in contrast, we are considering the corresponding profinite recognition problem: whether an Artin group whose profinite completion is isomorphic to that of a RAAG must itself be right-angled.

\subsection{Profinite and pro-\texorpdfstring{$p$}{p} completions}
A group $G$ is \emph{residually finite} if, for every $g\in G\setminus\{1\}$, there exist a finite group $Q$ and a homomorphism $\varphi\colon G\to Q$ such that $\varphi(g)\neq 1$. By Mal'cev's theorem~\cite{Mal65}, every finitely generated linear group is residually finite, so every Coxeter group is residually finite since Coxeter groups are linear; see \cite[Corollary~6.12.11]{Dav08}.

The residual finiteness of arbitrary Artin groups remains open.
Nevertheless, it is known for several important classes. Artin groups of spherical type are linear, and hence residually finite, by work of Cohen--Wales and Digne~\cite{CW02,Digne03}. RAAGs are also linear and therefore residually finite; see Humphries~\cite{Hum94} or Hsu--Wise~\cite{HW99}. Moreover, even Artin groups of FC type are poly-free and residually finite \cite{BGMPP19}. Further classes of two-dimensional Artin groups were shown to be residually finite by Jankiewicz~\cite{Jan22}.

\medskip

Now let $\cN$ be the set of
finite-index normal subgroups of $G$, ordered by reverse inclusion. Whenever $N\subseteq M\in\cN$, there is a natural quotient homomorphism $G/N\to G/M$, and these maps form an inverse system.

\begin{definition}
The \emph{profinite completion} of $G$ is given by the inverse limit
\[
\widehat{G}\coloneq\varprojlim_{N\in\cN} G/N.
\]
Equivalently, after equipping each finite quotient $G/N$ with the
discrete topology, $\widehat{G}$ is the closed subgroup of
$\prod_{N\in\cN}G/N$ consisting of all compatible
families. In particular, $\widehat{G}$ is a compact Hausdorff and
totally disconnected topological group.

For a prime number $p$, let $\cN_p$ be the set of normal
subgroups $N\trianglelefteq G$ such that $G/N$ is a finite $p$-group.
The \emph{pro-$p$ completion} of $G$ is given by
\[
\widehat{G}^{(p)}\coloneq \varprojlim_{N\in\cN_p} G/N.
\]
Equivalently, the inverse limit is taken over all normal subgroups
$N\trianglelefteq G$ for which $[G:N]$ is a power of $p$.
\end{definition}

There is a canonical homomorphism $i:G\to\widehat{G}$ given by $i(g)=(gN)_{N\in\cN}$. Note that $G$ is residually finite if and only if $i$ is injective. 

The canonical homomorphism $i$ satisfies the following universal property: If $H$ is a profinite group and $\varphi: G\to H$ is a continuous homomorphism, then there exists a unique continuous homomorphism $\widehat{\varphi}:\widehat{G}\to H$ such that
$\widehat{\varphi}\circ i=\varphi$. 

\[
\begin{tikzcd}[column sep=3.5em, row sep=2.8em]
G \arrow[r,"i"] \arrow[dr,"\varphi"']
&
\widehat{G} \arrow[d,"\widehat{\varphi}"]
\\
&
H.
\end{tikzcd}
\]
The analogous universal property holds for $\widehat{G}^{(p)}$ when $H$ is a pro-$p$ group.

\begin{proposition}\label{prop:hom-finite-quotients}
Let $G$ be a group and let $Q$ be a finite group equipped with the discrete topology. Precomposition with the canonical homomorphism
$i: G\to\widehat{G}$ induces a natural bijection
\[
\begin{aligned}
\Hom_{\mathrm{cont}}(\widehat{G},Q)
&\to
\Hom(G,Q),\\
\psi
&\to
\psi\circ i.
\end{aligned}
\]
where $\Hom_{\mathrm{cont}}(\widehat{G},Q)$ is the set of continuous homomorphisms from $\widehat{G}$ to $Q$. Consequently, if $\widehat{G}\cong\widehat{H}$, then
$|\Hom(G,Q)|=|\Hom(H,Q)|$ for every finite group $Q$.
\end{proposition}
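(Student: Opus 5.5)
The plan is to show that $\psi\mapsto\psi\circ i$ is well defined, injective and surjective, and then read off the counting statement.

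\emph{Well defined.} If $\psi\colon\widehat G\to Q$ is a continuous homomorphism, then $\psi\circ i$ is a composite of homomorphisms, so it lies in $\Hom(G,Q)$.

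\emph{Surjective.} Let $\varphi\in\Hom(G,Q)$. I would not invoke the universal property as stated, since it asks $\varphi$ to be continuous on an untopologized $G$. Instead I argue directly. The kernel $N=\ker\varphi$ is normal of finite index, because $G/N$ embeds in $Q$, so $N\in\cN$. Let $\pi_N\colon\widehat G\to G/N$ be the coordinate projection. It is continuous because $\widehat G$ carries the subspace topology from $\prod_{M\in\cN}G/M$. Let $\bar\varphi\colon G/N\to Q$ be the induced injection and set $\psi=\bar\varphi\circ\pi_N$. Then $\psi$ is a continuous homomorphism, and $\psi(i(g))=\bar\varphi(gN)=\varphi(g)$, so $\psi\circ i=\varphi$.

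\emph{Injective.} Suppose $\psi_1\circ i=\psi_2\circ i$. The maps $\psi_1,\psi_2$ are continuous into a discrete, hence Hausdorff, space, so the set where they agree is closed. It contains $i(G)$, so it suffices to show that $i(G)$ is dense in $\widehat G$. A basic open set around $x=(x_M)\in\widehat G$ is determined by finitely many coordinates $M_1,\dots,M_r$. Set $M=M_1\cap\cdots\cap M_r\in\cN$. Then $x_M=gM$ for some $g\in G$, and compatibility of $x$ gives $x_{M_j}=gM_j$ for each $j$. Hence $i(g)$ lies in that basic neighbourhood, so $i(G)$ is dense and $\psi_1=\psi_2$.

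\emph{Counting.} Suppose $\Phi\colon\widehat G\to\widehat H$ is an isomorphism of topological groups; here I read ``$\cong$'' for profinite completions this way, as is standard. Then precomposition with $\Phi$ gives a bijection $\Hom_{\mathrm{cont}}(\widehat H,Q)\to\Hom_{\mathrm{cont}}(\widehat G,Q)$. Combining this with the bijection above, applied to both $G$ and $H$, yields $|\Hom(G,Q)|=|\Hom(H,Q)|$. Both sets are finite whenever $G$ and $H$ are finitely generated.

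The only step with real content is the density of $i(G)$, which relies on $\cN$ being closed under finite intersections.
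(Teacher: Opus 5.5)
Your proof is correct and is essentially the paper's argument. The paper obtains surjectivity and injectivity from the existence and uniqueness halves of the universal property of $\widehat G$, then compares $\Hom_{\mathrm{cont}}(\widehat G,Q)$ with $\Hom_{\mathrm{cont}}(\widehat H,Q)$. You prove those two halves directly, by factoring through $G/\ker\varphi$ and by the density of $i(G)$, which also spells out the uniqueness step hidden in the paper's ``clearly mutually inverse'' (and reading $\widehat G\cong\widehat H$ as a topological isomorphism is what the paper implicitly does too).
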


\begin{proof}
Let $\varphi:G\to Q$ be a homomorphism. Since $Q$ is finite, it is profinite. Hence, by the universal property of the profinite completion, there exists a unique continuous homomorphism
$\widehat{\varphi}: \widehat{G}\to Q$ such that $\widehat{\varphi}\circ i=\varphi$.

Conversely, every continuous homomorphism
$\psi\colon\widehat{G}\to Q$ gives a
homomorphism $\psi\circ i: G\to Q$. These two constructions are clearly mutually inverse, so we have a bijection $\Hom_{\mathrm{cont}}(\widehat{G},Q)\to
\Hom(G,Q)$.

Now if $\widehat{G}\cong\widehat{H}$, then 
\[
|\Hom(G,Q)|=|\Hom_{\mathrm{cont}}(\widehat{G},Q)|=|\Hom_{\mathrm{cont}}(\widehat{H},Q)|=|\Hom(H,Q)|.
\]

\end{proof}

For a group $G$, let $\cC(G)$ denote the set of isomorphism classes of finite quotients of $G$. The following theorem shows that, for a finitely generated group $G$, $\cC(G)$ determines its profinite completion.

\begin{theorem}[Dixon--Formanek--Poland--Ribes~\cite{DFPR82}]
\label{thm:finite-quotients-completion}
For finitely generated groups $G$ and $H$, $\cC(G)=\cC(H)$ if and only if $\widehat G\cong\widehat H$.
\end{theorem}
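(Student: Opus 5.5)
The plan is to prove the two implications separately. In each direction I will pass between finite quotients and continuous finite quotients of the completion.

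For the implication $\widehat G\cong\widehat H\Rightarrow\cC(G)=\cC(H)$, I will use Proposition~\ref{prop:hom-finite-quotients} and its proof. A finite quotient $G\to Q$ corresponds to a continuous surjection $\widehat G\to Q$. The map is surjective because $i(G)$ is dense in $\widehat G$ and $Q$ is discrete, so the image of $\widehat G$ equals the image of $i(G)$. Conversely, a continuous surjection $\psi\colon\widehat G\to Q$ restricts to a surjection $\psi\circ i\colon G\to Q$: its kernel is open, so each coset of the kernel is a nonempty open set and meets the dense subgroup $i(G)$. Thus $\cC(G)$ equals the set of isomorphism classes of continuous finite quotients of $\widehat G$, and this set is preserved by a topological isomorphism $\widehat G\cong\widehat H$. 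One must use that an abstract isomorphism of completions of finitely generated groups is automatically continuous by Nikolov--Segal. Alternatively, I will interpret $\cong$ as an isomorphism of profinite groups.

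For the converse, assume $\cC(G)=\cC(H)$, with $G$ and $H$ generated by $d$ elements. For each $n$, let $G_n$ be the intersection of all normal subgroups of $G$ of index at most $n$. Since $G$ is finitely generated, there are only finitely many homomorphisms from $G$ to groups of order at most $n$. Hence $G_n$ has finite index, and the subgroups $G_n$ form a cofinal descending chain with $\widehat G=\varprojlim G/G_n$; the same holds for $H_n$. The key claim is that $G/G_n\cong H/H_n$. Since $G/G_n\in\cC(G)=\cC(H)$, there is a surjection $H\to G/G_n$. Every quotient of $G/G_n$ of order at most $n$ is a quotient of $G$ of order at most $n$, so the kernel of $H\to G/G_n$ is an intersection of normal subgroups of $H$ of index at most $n$ and therefore contains $H_n$. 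This gives a surjection $H/H_n\to G/G_n$, and symmetrically a surjection $G/G_n\to H/H_n$. Since both groups are finite, they are isomorphic.

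The main obstacle is compatibility: the isomorphisms $G/G_n\cong H/H_n$ must assemble into an inverse system. I will handle this by a compactness argument. Let $I_n$ be the finite nonempty set of isomorphisms $G/G_n\to H/H_n$. Every isomorphism carries the characteristic-type subgroup $G_m/G_n$ (for $m\le n$) to $H_m/H_n$, because it is defined intrinsically as the intersection of normal subgroups of index at most $m$. So there are restriction maps $I_n\to I_m$, and an inverse limit of nonempty finite sets is nonempty by K\"onig's lemma. A compatible family then yields $\widehat G\cong\widehat H$.
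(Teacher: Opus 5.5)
The paper gives no proof of this statement. It quotes it from Dixon--Formanek--Poland--Ribes and uses it only as background, adding the Nikolov--Segal remark to pass between abstract and topological isomorphisms. Your proposal is correct and supplies the standard self-contained argument.

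\textbf{Easy direction.} This is the correspondence between finite quotients of $G$ and continuous finite quotients of $\widehat G$. Your density arguments for surjectivity in both directions are right. Your handling of abstract versus topological isomorphism matches the paper's remark.

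\textbf{Hard direction, level $n$.} The subgroups $G_n$ are characteristic, of finite index and cofinal, and your level-$n$ isomorphism $G/G_n\cong H/H_n$ is proved correctly. One wording point: the fact really behind $H_n\subseteq\ker(H\to G/G_n)$ is that the normal subgroups of $G/G_n$ of index at most $n$ intersect trivially, by the definition of $G_n$. Hence the kernel is the intersection of their preimages in $H$, each of which has index at most $n$. That is what your sentence about ``quotients of $G/G_n$ of order at most $n$'' is implicitly using.

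\textbf{Compatibility.} For $m\le n$, $G_m/G_n$ is intrinsically the intersection of the normal subgroups of $G/G_n$ of index at most $m$. So every isomorphism $G/G_n\to H/H_n$ descends to $G/G_m\to H/H_m$, and these restriction maps are compatible. K\"onig's lemma then yields a thread, that is, an isomorphism of cofinal inverse systems, which gives a topological isomorphism $\widehat G\cong\widehat H$.

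\textbf{What your version adds over the citation.} First, it uses no residual finiteness. This matters here, since Artin groups are not known to be residually finite. Second, its first half alone, without the compactness step, already gives $|\Hom(G,Q)|=|\Hom(H,Q)|$ for every finite group $Q$. Take $n\ge |Q|$: every homomorphism to $Q$ then factors through $G/G_n$, and likewise through $H/H_n$. That count is the only consequence of $\widehat G\cong\widehat H$ that Lemma~\ref{lem:finite-quotient-obstruction} uses. The citation, of course, buys brevity.
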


\begin{remark}
A profinite group $G$ is \emph{topologically generated} by a subset $S\subseteq G$ if $\overline{\langle S\rangle}=P$, and is
\emph{topologically finitely generated} if such an $S$ can be chosen to be finite. If $G$ and $H$ are finitely generated, then $\widehat G$ and $\widehat H$ are topologically finitely generated. By the strong completeness theorem of Nikolov--Segal, any abstract isomorphism $\widehat G\to\widehat H$ is automatically a topological isomorphism~\cite{NS07a,NS07b}.
\end{remark}

\subsection{Rigidity and subgroup results for pro-\texorpdfstring{$p$}{p} RAAGs}

A profinite group is a \emph{pro-$p$ group} if it is isomorphic, as a topological group, to an inverse limit of finite $p$-groups. Equivalently, all of its finite continuous quotients are $p$-groups.

For a profinite group $G$, define
\[
R_p(G)
\coloneq
\bigcap_{\substack{N\trianglelefteq_o G\\
G/N\text{ is a finite }p\text{-group}}}N,
\]
where $N\trianglelefteq_o G$ means that $N$ is an open normal subgroup of 
$G$. The quotient
\[
G_{(p)}\coloneq G/R_p(G)
\]
is called the \emph{maximal pro-$p$ quotient} of $G$ and it is easy to check that 
\[
G_{(p)}= \varprojlim_{\substack{N\trianglelefteq_o G\\G/N\text{ is a finite }p\text{-group}}}
 G/N.
\]

\begin{lemma}\label{lem:maximal-pro-p-quotient}
Let $G$ be a group and let $p$ be a prime number. Then
\[
(\widehat G)_{(p)}\cong\widehat G^{(p)}.
\]
Consequently, an isomorphism $\widehat G\cong\widehat H$ induces an
isomorphism
\[
\widehat G^{(p)}
\cong
\widehat H^{(p)}
\]
for every prime $p$.
\end{lemma}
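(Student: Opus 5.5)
The plan is to identify both sides as inverse limits over the same inverse system of finite $p$-group quotients, and then use the universal properties to produce mutually inverse continuous maps. The key observation is a bijection between two index sets. On one side are the open normal subgroups $N\trianglelefteq_o\widehat G$ with $\widehat G/N$ a finite $p$-group. On the other are the normal subgroups $M\trianglelefteq G$ with $G/M$ a finite $p$-group. The map is $N\mapsto i^{-1}(N)$, with inverse $M\mapsto\overline{i(M)}$.

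First, for an open normal $N$, the composite $G\to\widehat G\to\widehat G/N$ is surjective, because $i(G)$ is dense and $N$ is open. Hence $G/i^{-1}(N)\cong\widehat G/N$, and this is a $p$-group.

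Conversely, take $M$ with $G/M$ a finite $p$-group. By Proposition~\ref{prop:hom-finite-quotients}, the quotient map $G\to G/M$ extends uniquely to a continuous surjection $\widehat G\to G/M$. Its kernel $N_M$ is open and normal, satisfies $i^{-1}(N_M)=M$, and equals $\overline{i(M)}$. One checks that these two assignments are mutually inverse and preserve inclusions. The only point needing care is that an open normal subgroup $N$ equals $\overline{i(i^{-1}(N))}$. This holds because $N$ is open, so $N\cap i(G)$ is dense in $N$.

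Given the bijection, the inverse systems $\{\widehat G/N\}$ and $\{G/M\}$ are isomorphic compatibly with the transition maps. Taking inverse limits then gives $(\widehat G)_{(p)}\cong\widehat G^{(p)}$ as topological groups. Alternatively, one can argue purely by universal properties. The map $G\to\widehat G\to(\widehat G)_{(p)}$ induces $\widehat G^{(p)}\to(\widehat G)_{(p)}$. In the other direction, the continuous map $\widehat G\to\widehat G^{(p)}$, obtained from the profinite universal property since a pro-$p$ group is profinite, kills $R_p(\widehat G)$. It therefore factors through $(\widehat G)_{(p)}$. The two composites restrict to the identity on dense images, so by uniqueness both are identities.

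For the consequence, an isomorphism $\widehat G\cong\widehat H$ is topological by Nikolov--Segal, or we simply assume it is continuous. It therefore carries the family of open normal subgroups with $p$-group quotient onto the corresponding family, and so carries $R_p(\widehat G)$ onto $R_p(\widehat H)$. Hence $(\widehat G)_{(p)}\cong(\widehat H)_{(p)}$, and the first part gives the claim.

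The main (mild) obstacle is the density and openness bookkeeping in the correspondence, namely that $\widehat G\to\widehat G/N$ restricted to $G$ is onto. This is where openness of $N$ and density of $i(G)$ are used. Beyond that, the argument is routine.
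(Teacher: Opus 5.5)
Your proposal is correct and follows the same route as the paper. The paper likewise identifies the continuous finite $p$-group quotients of $\widehat G$ with the finite $p$-group quotients of $G$, and concludes that both sides are inverse limits of the same system; you supply the density and openness bookkeeping (the bijection $N\mapsto i^{-1}(N)$, $M\mapsto\overline{i(M)}$) that the paper leaves implicit, and you make explicit that the isomorphism $\widehat G\cong\widehat H$ must be continuous, which the paper also uses tacitly (automatic by Nikolov--Segal when $G,H$ are finitely generated).
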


\begin{proof}
By the universal property of the profinite completion, the finite continuous $p$-group quotients of $\widehat G$ are naturally the same as the finite $p$-group quotients of $G$. So $(\widehat{G})_{(p)}$ and $\widehat{G}^{(p)}$ are inverse limits over the same system of finite $p$-group quotients, and therefore, $(\widehat G)_{(p)}\cong\widehat G^{(p)}$. 

Now if $\widehat{G}\cong\widehat{H}$, then passing to maximal pro-$p$ quotients gives 
\[
(\widehat{G})_{(p)}\cong (\widehat{H})_{(p)}.
\]
and thus, $\widehat{G}^{(p)}\cong\widehat{H}^{(p)}$.
\end{proof}

Kropholler and Wilkes strengthened Droms' rigidity theorem by showing
that, for any fixed prime $p$, the defining graph of a right-angled
Artin group is already determined by its pro-$p$ completion.

\begin{theorem}[Kropholler--Wilkes {\cite[Theorem~4]{KW16}}]
\label{thm:KW-pro-p-rigidity}
Let $A_\Gamma$ and $A_\Lambda$ be right-angled Artin groups with
finite defining graphs $\Gamma$ and $\Lambda$, and let $p$ be a
prime number. Then
\[
\widehat{A_\Gamma}^{(p)}
\cong
\widehat{A_\Lambda}^{(p)}
\]
if and only if $\Gamma\cong\Lambda$.
\end{theorem}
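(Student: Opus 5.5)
The \emph{if} direction is immediate, since $\Gamma\cong\Lambda$ gives $A_\Gamma\cong A_\Lambda$ and hence isomorphic pro-$p$ completions. For the converse, I would pass from the pro-$p$ completion to mod-$p$ cohomology and recover the graph from the cohomology ring. The route is:
\[
\widehat{A_\Gamma}^{(p)}\ \rightsquigarrow\ H^*_{\mathrm{cont}}\bigl(\widehat{A_\Gamma}^{(p)},\field{F}_p\bigr)\cong H^*(A_\Gamma,\field{F}_p)\ \rightsquigarrow\ \Gamma .
\]
An isomorphism of pro-$p$ groups induces an isomorphism of graded $\field{F}_p$-algebras on continuous cohomology. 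So the whole proof reduces to two steps.

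\textbf{Step 1 (comparison).} Show that RAAGs are \emph{$p$-good}: the map $A_\Gamma\to\widehat{A_\Gamma}^{(p)}$ induces an isomorphism on cohomology with coefficients in $\field{F}_p$. I would argue by induction on $|V(\Gamma)|$. Pick a vertex $v$ and write
\[
A_\Gamma=A_{\Gamma\setminus v}\ast_{A_{\mathrm{lk}(v)}}\bigl(A_{\mathrm{lk}(v)}\times\langle v\rangle\bigr),
\]
equivalently an HNN extension of $A_{\Gamma\setminus v}$ along the standard parabolic subgroup $A_{\mathrm{lk}(v)}$ with trivial automorphism. The ingredients are as follows.
\begin{enumerate}
\item Standard parabolics are retracts by Lemma~\ref{lem:standard-retract-even}. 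Hence the pro-$p$ topology on $A_\Gamma$ induces the full pro-$p$ topology on them, and their pro-$p$ completions embed as closed subgroups.
\item The splitting is therefore \emph{$p$-efficient}, so $\widehat{A_\Gamma}^{(p)}$ is the corresponding pro-$p$ HNN extension (or amalgam). Its Mayer--Vietoris sequence maps to the discrete one.
\item The five lemma, together with the inductive hypothesis applied to $A_{\Gamma\setminus v}$ and $A_{\mathrm{lk}(v)}$, then gives $p$-goodness. The base cases $\ZZ$ and direct products are handled by the Künneth formula.
\end{enumerate}
I expect this to be the main obstacle. The delicate points are checking separability and efficiency of the splitting in the pro-$p$ topology, and verifying that the pro-$p$ Mayer--Vietoris sequence exists for the pro-$p$ HNN extension. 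These are exactly the places where one uses that the edge groups are retracts.

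\textbf{Step 2 (the cohomology ring).} Compute $H^*(A_\Gamma,\field{F}_p)$ via the Salvetti complex, a finite $K(A_\Gamma,1)$ whose cells are tori indexed by the cliques of $\Gamma$. This gives the exterior face ring of the flag complex of $\Gamma$,
\[
H^*(A_\Gamma,\field{F}_p)\cong \Lambda_{\field{F}_p}(e_v : v\in V)\big/\bigl(e_ue_w : \{u,w\}\notin E(\Gamma)\bigr).
\]
This holds for every prime, including $p=2$, since the squares $e_v^2$ vanish at the cellular level.

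\textbf{Step 3 (recovering $\Gamma$).} Recover $\Gamma$ from the algebra $R=H^*(A_\Gamma,\field{F}_p)$ up to isomorphism, without access to the preferred basis $\{e_v\}$.
\begin{enumerate}
\item $|V|=\dim R^1$ and $|E|=\dim R^2$, so the numbers of vertices and edges are invariants.
\item For $0\neq x\in R^1$, consider $d(x)=\dim (x\cdot R^1)$. For $x=e_v$ one gets $d(x)=\deg v$. For a general $x=\sum_v c_v e_v$ with support $S$, one has $d(x)\geq|\mathrm{St}(S)|-1$.
\item From this, characterize the lines spanned by vertex classes, or at least a basis of $R^1$ realizing the minimal values of $d$, intrinsically. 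A Gubeladze-type argument (the exterior face ring determines the complex) builds a basis $\{x_v\}$ of $R^1$ such that $x_ux_w=0$ exactly for non-edges.
\item Applying this to both graphs yields a bijection $V(\Gamma)\to V(\Lambda)$ that preserves adjacency.
\end{enumerate}
I would cite or adapt Gubeladze's result here rather than redo it. If pressed, I would argue by induction, removing a vertex $v$ that minimizes $d$ and passing to the quotient ring $R/(x_v)$, which is the exterior face ring of $\Gamma\setminus v$.
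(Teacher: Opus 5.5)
The paper gives no proof of this statement. It quotes it from Kropholler--Wilkes~\cite{KW16} and uses it as a black box, so there is no internal argument to compare against.

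Judged on its own, your outline is the standard cohomological proof:
\begin{itemize}
\item mod-$p$ cohomology of the completion agrees with that of $A_\Gamma$;
\item this ring is the exterior face ring of the flag complex;
\item that ring determines the complex.
\end{itemize}
It is sound provided Step~3 is discharged by citing Gubeladze's theorem that the exterior face ring over a field determines the simplicial complex.

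Step~1 is correct but more work than you need. For the Mayer--Vietoris comparison you only need the pro-$p$ HNN extension to be proper, and that is automatic. $A_{\mathrm{lk}(v)}$ is a retract of $A_{\Gamma\setminus v}$, which is a retract of $A_\Gamma$ (kill $v$), so no separate separability check is required.

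You can in fact bypass $p$-goodness entirely. The ring in Step~2 is quadratic: it is generated in degree $1$, with relations $e_ue_w=0$ for non-edges in degree $2$. So it equals $T(H^1)/(K)$, where $K=\ker(\cup\colon H^1\otimes H^1\to H^2)$. For any group $G$, the map $G\to\widehat G^{(p)}$ is an isomorphism on $H^1(-;\mathbb F_p)$ and injective on $H^2(-;\mathbb F_p)$. The injectivity holds because a central extension of $\widehat G^{(p)}$ by $\mathbb F_p$ that splits over $G$ splits, by the universal property. Hence $K$, and with it the exterior face ring, is already an invariant of $\widehat{A_\Gamma}^{(p)}$.

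The self-contained parts of Step~3, however, do not hold up.
\begin{itemize}
\item If $\mathrm{St}(S)$ is the union of the stars of the vertices in $S$, the inequality $d(x)\ge|\mathrm{St}(S)|-1$ is false. For the path $u-w-v$ and $x=e_u+e_v$, one has $xe_u=xe_v=0$ and $xe_w=e_ue_w+e_ve_w$, so $d(x)=1<2$.
\item If $\mathrm{St}(S)$ is the intersection of the stars, the inequality is true but too weak to be useful.
\item The same example shows that the minimal value of $d$ is also attained off the vertex lines. So minimizing $d$ does not pick out vertex classes.
\end{itemize}

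The fallback induction has a genuine gap. Grant that $R/(x)$ is the exterior face ring of some $\Gamma\setminus v$; this itself needs proof, since $x$ need not be a vertex class. Even then, the induction returns only the isomorphism type of $\Gamma\setminus v$ together with $\deg v=d(x)$, and that does not determine $\Gamma$. Deleting a degree-one vertex from $P_4$ and from $K_{1,3}$ leaves $P_3$ in both cases.

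To close the gap you would have to recover $\mathrm{lk}(v)$ as a subset, compatibly with the isomorphism produced by the induction. That is precisely the nontrivial content of Gubeladze's theorem. So keep the citation and discard the fallback.
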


As an immediate consequence, RAAGs are profinitely rigid relative to the class of right-angled Artin groups:

\begin{corollary}\label{cor:RAAG-profinite-rigidity}
Let $A_\Gamma$ and $A_\Lambda$ be right-angled Artin groups with
finite defining graphs $\Gamma$ and $\Lambda$. Then
$\widehat{A_\Gamma}\cong\widehat{A_\Lambda}$
if and only if $\Gamma\cong\Lambda$. Equivalently,
$\widehat{A_\Gamma}\cong\widehat{A_\Lambda}$ if and only if $A_\Gamma\cong A_\Lambda$.
\end{corollary}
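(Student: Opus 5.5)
The corollary follows by combining Lemma~\ref{lem:maximal-pro-p-quotient} with Theorem~\ref{thm:KW-pro-p-rigidity}, so the plan is to prove the two implications separately.

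For the forward direction, suppose $\widehat{A_\Gamma}\cong\widehat{A_\Lambda}$. Fix any prime $p$, say $p=2$; since Kropholler--Wilkes works for every prime, one prime is enough. Lemma~\ref{lem:maximal-pro-p-quotient} turns the profinite isomorphism into an isomorphism of pro-$p$ completions,
\[
\widehat{A_\Gamma}^{(p)}\cong\widehat{A_\Lambda}^{(p)}.
\]
This step is legitimate for arbitrary groups: the maximal pro-$p$ quotient is defined intrinsically as $G/R_p(G)$. By the Nikolov--Segal remark, any abstract isomorphism of the profinite completions is automatically topological, since both groups are topologically finitely generated. So $R_p$ is carried to $R_p$ and the quotients are isomorphic. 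Both groups are RAAGs with finite defining graphs, so Theorem~\ref{thm:KW-pro-p-rigidity} gives $\Gamma\cong\Lambda$.

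For the converse, suppose $\Gamma\cong\Lambda$. A graph isomorphism sends generators to generators and preserves the commutation relations, so it induces a group isomorphism $A_\Gamma\cong A_\Lambda$. The universal property then gives $\widehat{A_\Gamma}\cong\widehat{A_\Lambda}$, since profinite completion is functorial.

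For the second formulation, note that $A_\Gamma\cong A_\Lambda$ implies $\widehat{A_\Gamma}\cong\widehat{A_\Lambda}$ by functoriality. Conversely, $\widehat{A_\Gamma}\cong\widehat{A_\Lambda}$ implies $\Gamma\cong\Lambda$ by the forward direction, and hence $A_\Gamma\cong A_\Lambda$. Droms' theorem is not needed, but it agrees with the conclusion.

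There is no real obstacle; the deep input is the cited Kropholler--Wilkes theorem. The only point needing care is that the passage to maximal pro-$p$ quotients in Lemma~\ref{lem:maximal-pro-p-quotient} respects the isomorphism, which is why topological continuity (Nikolov--Segal) matters.
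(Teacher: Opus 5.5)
Your proposal is correct and follows the paper's proof: pass to pro-$p$ completions via Lemma~\ref{lem:maximal-pro-p-quotient}, apply Theorem~\ref{thm:KW-pro-p-rigidity}, and get the converse from functoriality of profinite completion. Your observations that a single prime suffices and that the Nikolov--Segal remark makes continuity automatic are harmless refinements of the same argument.
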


\begin{proof}
If $\widehat{A_\Gamma}\cong\widehat{A_\Lambda}$, by Lemma~\ref{lem:maximal-pro-p-quotient}, we have that
\[
\widehat{A_\Gamma}^{(p)}
\cong
\widehat{A_\Lambda}^{(p)}
\]
for every prime number $p$. Then Theorem~\ref{thm:KW-pro-p-rigidity} implies that $\Gamma\cong\Lambda$. The converse follows from the functoriality of profinite completion.
\end{proof}

\bigskip

Given an Artin group $A_\Gamma$ and a prime number $p$, let $\Gamma_p$ be
the labeled graph obtained from $\Gamma$ by the following operations:
\begin{enumerate}
\item identify the endpoints of every odd-labeled edge and remove all
loops created by these identifications;
\item replace every label of the form $2kp^t$, where $t\geq 0$ and
$\gcd(k,p)=1$, by $2p^t$;
\item whenever multiple edges arise, replace them by a single edge
carrying the smallest of their labels.
\end{enumerate}
Thus $\Gamma_p$ is an even labeled graph whose edge labels have the form $2p^t$, and is called the \emph{$p$-part of $A_{\Gamma}$}.

\begin{example}\label{ex:p-part}
Let $\Gamma$ be the labeled triangle on $\{a,b,c\}$ whose edges
$\{a,b\}$, $\{a,c\}$, and $\{b,c\}$ have labels $3$, $12$, and $4$,
respectively. Since $\{a,b\}$ has odd label, its endpoints are
identified in every $\Gamma_p$; write $x=[a]=[b]$. The remaining
edges become parallel edges joining $x$ to $c$. For $p=2$, both labels
reduce to $4$, so $\Gamma_2$ has one edge labeled $4$. For $p=3$, the
labels reduce to $6$ and $2$, and retaining the smaller one gives an
edge labeled $2$. For every prime $p\geq 5$, both labels reduce to
$2$. Thus $\Gamma_3$ and $\Gamma_p$ are isomorphic as labeled graphs
for every $p\geq 5$, as illustrated in
Figure~\ref{fig:gamma-p-example}.
\end{example}

\begin{figure}[ht]
\centering
\resizebox{0.98\textwidth}{!}{%
\begin{tikzpicture}[
    vertex/.style={circle, fill=black, inner sep=1.8pt},
    every node/.style={font=\small}
]
\begin{scope}[shift={(0,0)}]
    \node[vertex, label=above:$a$] (a) at (0,1.5) {};
    \node[vertex, label=below:$b$] (b) at (0,0) {};
    \node[vertex, label=right:$c$] (c) at (1.65,0.75) {};
    \draw (a)--(b);
    \draw (a)--(c);
    \draw (b)--(c);
    \node[left=2pt] at (0,0.75) {$3$};
    \node[above right=-1pt] at (0.70,1.10) {$12$};
    \node[below right=-1pt] at (0.70,0.40) {$4$};
    \node at (0.82,-0.55) {$\Gamma$};
\end{scope}
\begin{scope}[shift={(3.1,0.75)}]
    \node[vertex] (x2) at (0,0) {};
    \node[vertex, label=right:$c$] (c2) at (1.55,0) {};
    \draw (x2)--(c2);
    \node[above=3pt] at (x2) {$x=[a]=[b]$};
    \node[below=2pt] at (0.78,0) {$4$};
    \node at (0.78,-0.72) {$\Gamma_2$};
\end{scope}
\begin{scope}[shift={(6.3,0.75)}]
    \node[vertex] (x3) at (0,0) {};
    \node[vertex, label=right:$c$] (c3) at (1.55,0) {};
    \draw (x3)--(c3);
    \node[above=3pt] at (x3) {$x=[a]=[b]$};
    \node[below=2pt] at (0.78,0) {$2$};
    \node at (0.78,-0.72) {$\Gamma_3$};
\end{scope}
\begin{scope}[shift={(9.5,0.75)}]
    \node[vertex] (xp) at (0,0) {};
    \node[vertex, label=right:$c$] (cp) at (1.55,0) {};
    \draw (xp)--(cp);
    \node[above=3pt] at (xp) {$x=[a]=[b]$};
    \node[below=2pt] at (0.78,0) {$2$};
    \node at (0.78,-0.72) {$\Gamma_p,\ p\geq 5$};
\end{scope}
\end{tikzpicture}%
}
\caption{The graphs $\Gamma_p$ associated with the labeled graph $\Gamma$.}
\label{fig:gamma-p-example}
\end{figure}

Escart\'in Ferrer, Leoni, and Mart\'inez P\'erez recently established
the following description of the pro-$p$ completion of an arbitrary
Artin group.

\begin{theorem}[Escart\'in Ferrer--Leoni--Mart\'inez P\'erez
{\cite[Theorem~B]{ELMP26}}]
\label{thm:ELMP-pro-p}
Let $A_\Gamma$ be an Artin group and let $p$ be a prime number. The canonical epimorphism $A_\Gamma\to A_{\Gamma_p}$ induces an isomorphism
\[
\widehat{A_\Gamma}^{(p)}\cong \widehat{A_{\Gamma_p}}^{(p)}.
\]
Equivalently, $\widehat{A_\Gamma}^{(p)}$ has the pro-$p$ presentation
obtained by interpreting the defining presentation of
$A_{\Gamma_p}$ in the category of pro-$p$ groups.
\end{theorem}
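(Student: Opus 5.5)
The statement is \cite[Theorem~B]{ELMP26}. I would prove it by comparing finite $p$-group quotients directly. Let $\pi\colon A_\Gamma\to A_{\Gamma_p}$ be the map sending each vertex to its class under the odd-collapse.

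\textbf{Step 1: $\pi$ is a well-defined epimorphism.} Surjectivity is clear, since every vertex of $\Gamma_p$ is the image of a vertex of $\Gamma$. For well-definedness, check each relation of $\Gamma$.
\begin{itemize}
\item An edge whose endpoints lie in the same odd component has both endpoints mapped to the same generator, so its relation holds trivially.
\item An even edge $\{a,c\}$ labeled $2kp^t$ between different classes $[a]\neq[c]$ maps to a pair joined in $\Gamma_p$ by an edge labeled $2p^s$ with $s\le t$, by the minimum rule. Use the identity $(ca)^N=a^{-1}(ac)^Na$. It shows that the relation $(ac)^{p^s}=(ca)^{p^s}$ says exactly that $a$ commutes with $(ac)^{p^s}$. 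Then $a$ also commutes with $(ac)^{kp^t}$, which is the original relation.
\end{itemize}

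Because $\pi$ is surjective, it suffices to show that every homomorphism $\varphi\colon A_\Gamma\to P$ with $P$ a finite $p$-group factors through $\pi$. Then the two inverse systems of finite $p$-quotients coincide, and $\widehat\pi$ is an isomorphism of pro-$p$ completions.

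\textbf{Step 2: odd edges collapse.} Let $\{a,b\}$ have odd label $2k+1$, and put $H=\langle\varphi(a),\varphi(b)\rangle\le P$. The odd Artin relation forces $a$ and $b$ to be conjugate, so $\varphi(a)\equiv\varphi(b)$ modulo $[H,H]$, hence modulo $\Phi(H)$. The Burnside basis theorem then gives $H=\langle\varphi(a)\rangle$, so $H$ is cyclic and the two images commute. The relation $(ab)^ka=(ba)^kb$ then reads $\varphi(a)^{k+1}\varphi(b)^k=\varphi(b)^{k+1}\varphi(a)^k$, which gives $\varphi(a)=\varphi(b)$. Consequently $\varphi$ is constant on odd components, and even edges inside one component impose no condition.

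\textbf{Step 3: even labels reduce to $2p^t$.} For an edge $\{a,c\}$ labeled $2kp^t$ with $\gcd(k,p)=1$, set $x=\varphi((ac)^{p^t})$ and $y=\varphi((ca)^{p^t})$. The relation says $x^k=y^k$. Since $|P|$ is a power of $p$, choose $l$ with $kl\equiv 1 \pmod{|P|}$; then $x=x^{kl}=y^{kl}=y$. So the $\Gamma_p$ relation with label $2p^t$ holds in $P$.

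\textbf{Step 4: factorization.} When parallel edges arise, the relation with the smallest label $2p^s$ holds by Step~3. By the commuting reformulation in Step~1, it implies the relations for all larger labels $2p^{t'}$. Hence $\varphi$ respects every defining relation of $A_{\Gamma_p}$ and factors through $\pi$.

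The only step that is not formal is Step~2: the conjugate generators must become actually equal in $p$-groups. The Frattini argument above is the key point, and it works for $p=2$ as well.
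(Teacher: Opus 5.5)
Your argument is correct. Note that the paper itself gives no proof of this statement: it quotes it from \cite[Theorem~B]{ELMP26} and uses it as a black box. So your proposal is a genuinely different, self-contained route rather than a reconstruction of anything in the paper.

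The reduction is sound. Since $\pi$ is surjective, showing that every homomorphism from $A_\Gamma$ to a finite $p$-group factors through $\pi$ identifies the two inverse systems of $p$-power-index normal subgroups. Hence $\widehat\pi^{(p)}$ is an isomorphism induced by the canonical map, as the statement requires.

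The two non-formal inputs sit exactly where $p$-groups enter:
\begin{enumerate}
\item In Step~2, $\varphi(b)$ is conjugate to $\varphi(a)$ by $\varphi((ab)^k)$, which lies in $H=\langle\varphi(a),\varphi(b)\rangle$. So $\varphi(b)\varphi(a)^{-1}\in[H,H]\subseteq\Phi(H)$, and the Burnside basis theorem makes $H$ cyclic. This uses only nilpotence, and it genuinely fails for general finite groups (e.g.\ $B_3\twoheadrightarrow S_3$). That failure is precisely the gap the paper later closes with Coxeter quotients in Proposition~\ref{prop:no-odd-edges}.
\item In Step~3, $g\mapsto g^k$ is a bijection on a finite $p$-group when $\gcd(k,p)=1$.
\end{enumerate}

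Two small remarks. First, the second half of your Step~4 is unnecessary for the factorization. Only the minimal-label relation is a defining relation of $A_{\Gamma_p}$, so you need just Step~3 applied to an edge realizing the minimum. The ``minimal implies larger'' implication is what Step~1 needs, not Step~4. Second, you do not address the ``equivalently'' clause about pro-$p$ presentations. It is the general formal fact that the pro-$p$ completion of an abstract group $\langle X\mid R\rangle$ is the pro-$p$ group with the same presentation, since both have the same continuous homomorphisms to finite $p$-groups.

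As for what each approach buys: the citation saves space and attributes the result. Your argument shows that the paper's use of this theorem needs nothing beyond the Burnside basis theorem and coprime-power invertibility in $p$-groups.
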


Thus the pro-$p$ completion of $A_\Gamma$ depends only on the
$p$-part $\Gamma_p$. In passing from $\Gamma$ to $\Gamma_p$, every
odd-labeled component is collapsed, an even label $2m$ retains only
twice the $p$-part of $m$, and any parallel edges created by the
collapse are merged by keeping the smallest label. Consequently,
this passage may lose information about both the odd-labeled edges
and the original even labels.

Our first task is to show that, if $\widehat{A_\Gamma}^{(p)}$ is isomorphic to the pro-$p$ completion of a RAAG, then $\Gamma_p$ must itself be right-angled. For this purpose, we combine the retract structure of even Artin groups with a theorem on two-generated closed subgroups of
pro-$p$ RAAGs.

\begin{lemma}\label{lem:pro-p-completion-retract}
Let $p$ be a prime number, and let $H\leq G$ be a retract, i.e., the inclusion
$\iota: H\hookrightarrow G$ admits a retraction
$r: G\to H$ such that $r\circ\iota=\operatorname{id}_H$. Then the induced homomorphism
\[
\widehat\iota^{(p)}
\colon
\widehat H^{(p)}
\longrightarrow
\widehat G^{(p)}
\]
is a split injection with left inverse $\widehat r^{(p)}$.
Consequently, $\widehat H^{(p)}$ identifies with a closed retract of
$\widehat G^{(p)}$.
\end{lemma}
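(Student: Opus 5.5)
The plan is to use functoriality of pro-$p$ completion. Any homomorphism $f\colon G_1\to G_2$ of groups induces a continuous homomorphism $\widehat f^{(p)}\colon\widehat{G_1}^{(p)}\to\widehat{G_2}^{(p)}$. To construct it, compose $f$ with the canonical map $i_2\colon G_2\to\widehat{G_2}^{(p)}$ and apply the universal property of $\widehat{G_1}^{(p)}$ recorded after the definition, since $\widehat{G_2}^{(p)}$ is a pro-$p$ group. The uniqueness clause of that universal property gives two facts: $\widehat{(g\circ f)}^{(p)}=\widehat g^{(p)}\circ\widehat f^{(p)}$, and $\widehat{\operatorname{id}}^{(p)}=\operatorname{id}$. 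In both cases, the two maps being compared are continuous homomorphisms that agree after precomposition with the canonical map, and the canonical map has dense image.

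Apply this to $r\circ\iota=\operatorname{id}_H$. This gives
\[
\widehat r^{(p)}\circ\widehat\iota^{(p)}=\widehat{(r\circ\iota)}^{(p)}=\widehat{\operatorname{id}_H}^{(p)}=\operatorname{id}_{\widehat H^{(p)}}.
\]
Hence $\widehat\iota^{(p)}$ is injective and has the left inverse $\widehat r^{(p)}$. The image $\widehat\iota^{(p)}(\widehat H^{(p)})$ is the continuous image of a compact space in a Hausdorff space, so it is closed. The map $\widehat\iota^{(p)}$ is a continuous bijection from a compact space onto this Hausdorff image, so it is a topological isomorphism onto it. Moreover, $e=\widehat\iota^{(p)}\circ\widehat r^{(p)}$ is a continuous idempotent endomorphism of $\widehat G^{(p)}$ whose image is exactly this subgroup. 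Therefore the image is a closed retract isomorphic to $\widehat H^{(p)}$.

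The only point needing care is the uniqueness argument. The canonical map $H\to\widehat H^{(p)}$ has dense image, since the images of $H$ in all the finite quotients $H/N$ are surjective. Two continuous homomorphisms into a Hausdorff group that agree on a dense subset therefore coincide. No injectivity of $H\to\widehat H^{(p)}$ is needed, so the argument does not assume residual $p$-finiteness. I expect no real obstacle here; the lemma is essentially formal.
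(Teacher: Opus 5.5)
Your proposal is correct and follows the paper's proof essentially verbatim: functoriality gives $\widehat r^{(p)}\circ\widehat\iota^{(p)}=\operatorname{id}_{\widehat H^{(p)}}$, hence injectivity, the compact image is closed, and $\widehat r^{(p)}$ restricts to the inverse on that image. Your added justification of functoriality (via uniqueness and density of the canonical image) and the idempotent $\widehat\iota^{(p)}\circ\widehat r^{(p)}$ only make explicit what the paper leaves implicit.
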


\begin{proof}
Since the pro-$p$ completion is functorial, 
\[
\widehat r^{(p)}\circ\widehat\iota^{(p)} = \widehat{r\circ\iota}^{(p)} = \operatorname{id}_{\widehat H^{(p)}}.
\]
Hence $\widehat\iota^{(p)}$ is injective. Its image is compact and
therefore closed in the Hausdorff group $\widehat G^{(p)}$, and the
restriction of $\widehat r^{(p)}$ to this image is its inverse.
\end{proof}

\bigskip
Recall that the free pro-$p$ group of rank $n$ is naturally isomorphic to the pro-$p$ completion $\widehat{F_n}^{(p)}$ of the abstract free group $F_n$ of rank $n$.

We will use the following result on closed topologically two-generated subgroups of pro-$p$ right-angled Artin groups.

\begin{theorem}[Casals-Ruiz--Pintonello--Zalesskii
{\cite[Theorem~3.14]{CPZ25}}]
\label{thm:two-generated-pro-p-raag}
Let $A_\Lambda$ be a right-angled Artin group. Every closed subgroup
of $\widehat{A_\Lambda}^{(p)}$ which is topologically generated by two
elements is either a free pro-$p$ group or a free abelian pro-$p$ group.
\end{theorem}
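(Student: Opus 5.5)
Since this is quoted from \cite{CPZ25}, the plan below is only how I would attempt it independently. I would argue by induction on $|V(\Lambda)|$, writing $G=\widehat{A_\Lambda}^{(p)}$ and letting $H=\overline{\langle x,y\rangle}\leq G$ be closed.

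\textbf{Base case.} If $\Lambda$ is complete, then $G\cong\ZZ_p^{n}$. Every closed subgroup of $\ZZ_p^n$ is free abelian pro-$p$, since $\ZZ_p$ is a PID and closed subgroups are $\ZZ_p$-submodules. Topological two-generation then forces rank at most $2$.

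\textbf{Splitting $G$.} If $\Lambda$ is not complete, choose a vertex $v$ that is not adjacent to some other vertex. There is the standard abstract splitting
\[
A_\Lambda = A_{\Lambda\setminus v} *_{A_{\mathrm{lk}(v)}} A_{\mathrm{st}(v)}, \qquad A_{\mathrm{st}(v)}=A_{\mathrm{lk}(v)}\times\langle v\rangle .
\]
I would check that this passes to a \emph{proper} pro-$p$ amalgamated free product $G=G_1\amalg_{K}G_2$. All the factors are standard parabolic subgroups of a RAAG, hence retracts, so by Lemma~\ref{lem:pro-p-completion-retract} their pro-$p$ completions embed as closed subgroups. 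Efficiency of the amalgam should also follow from the retractions, since they let one separate the factors in $p$-quotients.

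\textbf{Action on the tree.} By Ribes--Zalesskii pro-$p$ Bass--Serre theory, $H$ acts on the associated pro-$p$ tree $T$. There are two cases.
\begin{enumerate}
\item If $H$ fixes a vertex, then $H$ is conjugate into $G_1$ or $G_2$.
\begin{itemize}
\item In $G_1=\widehat{A_{\Lambda\setminus v}}^{(p)}$ the induction hypothesis applies directly.
\item In $G_2=\widehat{A_{\mathrm{lk}(v)}}^{(p)}\times\ZZ_p$, project to the first factor. The image is a two-generated closed subgroup of a smaller pro-$p$ RAAG, so by induction it is free or free abelian. Then analyze the extension by the central factor $H\cap\ZZ_p$. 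If $H\cap \ZZ_p\neq 1$, then $H$ has nontrivial center. A free pro-$p$ group of rank $2$ has trivial center, which forces the image to be procyclic, so $H$ is abelian. If $H\cap\ZZ_p=1$, then $H$ embeds in the image.
\end{itemize}
\item If $H$ fixes no vertex, the pro-$p$ structure theorem for groups acting on pro-$p$ trees gives a nontrivial splitting of $H$ as a pro-$p$ amalgam or HNN extension over intersections $H\cap K^g$. Here I would use that a two-generated pro-$p$ group admitting such a splitting with small edge groups is either free of rank $2$ or has a nontrivial abelian normal subgroup.
\end{enumerate}

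\textbf{Main obstacle.} The hard step is case~2 when $H$ meets edge stabilizers nontrivially. There I would need to show that a nontrivial $H\cap K^g$ forces commutation. My first attempt is to use that centralizers of elements in pro-$p$ RAAGs are again pro-$p$ RAAGs of special form, namely conjugates of $\widehat{A_{\mathrm{st}}}^{(p)}$-type subgroups. I would also use that $H$ is generated by two elements, and reduce to showing that either $x$ and $y$ commute or the action of $H$ on $T$ has trivial edge stabilizers. In the latter case $H$ is free pro-$p$ by the pro-$p$ Kurosh theorem, since the vertex groups would be free by induction.
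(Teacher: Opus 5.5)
The paper gives no proof of this statement. It is imported from \cite{CPZ25} as the case where $\mathcal{C}$ is the class of finite $p$-groups, so your sketch has to stand on its own. The base case, Case~1 (modulo a slip noted below) and the subcase of trivial edge stabilizers are essentially fine. There is, however, a genuine gap exactly at what you call the main obstacle, and the reduction you propose there is false for the tree you have fixed. Take $\Lambda$ with vertices $v,c,d$ and the single edge $\{v,c\}$, and split at $v$, which is not adjacent to $d$. Then $G=G_1\amalg_K G_2$ with $G_1=\widehat{F(c,d)}^{(p)}$, $K=\overline{\langle c\rangle}$ and $G_2=\overline{\langle c,v\rangle}\cong\ZZ_p^2$. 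Let $H=\overline{\langle c,\,dvd^{-1}\rangle}$.

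First, $c\in H\cap K$, so $H$ has a nontrivial edge stabilizer. Second, the commutator $[c,dvd^{-1}]$ is a nontrivial reduced word in $A_\Lambda=\langle c,v\rangle*\langle d\rangle$, which is residually $p$, so $H$ is nonabelian. Third, $H$ fixes no vertex: it lies in no conjugate of the abelian group $G_2$, while the retraction $G\to\overline{\langle v\rangle}$ kills every conjugate of $G_1$ but not $dvd^{-1}$. So non-commuting generators and nontrivial edge stabilizers coexist in Case~2. This $H$ (free, as the theorem predicts) is precisely a subgroup your argument must handle, not exclude.

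Granting a pro-$p$ splitting of $H$ over some $H\cap K^g$ does not close the gap either. The vertex and edge groups of such a splitting are closed subgroups of smaller pro-$p$ RAAGs, but nothing makes them two-generated, or even finitely generated, and your inductive hypothesis covers only two-generated subgroups. The dichotomy ``free of rank $2$ or has a nontrivial abelian normal subgroup'' would not finish the argument. For $1\neq\lambda\in 1+p\ZZ_p$, the group $\ZZ_p\rtimes_\lambda\ZZ_p=\langle a,t\mid tat^{-1}=a^{\lambda}\rangle$ is two-generated and is a proper pro-$p$ HNN extension over $\overline{\langle a\rangle}\cong\ZZ_p$. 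It has an abelian normal subgroup, yet it is neither free nor abelian. Excluding such subgroups requires real structural input about the ambient pro-$p$ RAAG, such as control of centralizers and normalizers of arbitrary elements, which your sketch only gestures at. The description you suggest is also too coarse even for discrete RAAGs: there the centralizer of $ab$ with $a,b$ non-adjacent is $\langle ab\rangle\times A_{\mathrm{lk}(a)\cap\mathrm{lk}(b)}$, not a conjugate of a star subgroup.

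Finally, the slip in Case~1. A nontrivial center of $H$ says nothing about the center of its image $\bar H$, since $H\cap\ZZ_p$ dies there. The correct argument runs as follows. If $\bar H$ is free of rank $2$, the images of the two generators of $H$ form a basis of $\bar H$. Sending that basis back to the generators gives a surjective section of $H\to\bar H$, which is therefore an isomorphism. Hence $H\cap\ZZ_p\neq1$ forces $\bar H$ to be abelian, and then $H\le\bar H\times\ZZ_p$ is abelian.
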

\begin{remark}
Theorem~\ref{thm:two-generated-pro-p-raag} is the special case of \cite[Theorem~3.14]{CPZ25} obtained by taking $\cC$ to be the class of finite $p$-groups. In this case, the pro-$\cC$ RAAG associated with $\Lambda$ is precisely $\widehat{A_{\Lambda}}^{(p)}$. Moreover, every closed subgroup of a pro-$p$ group is itself pro-$p$, and ``two-generated" in the pro-$p$ category means exactly topologically generated by two elements.
\end{remark}
Consequently, a topologically two-generated closed subgroup of $\widehat{A_\Lambda}^{(p)}$ cannot be both nonabelian and have nontrivial center. Indeed, suppose that such a subgroup $H$ exists. Since $H$ is nonabelian, it cannot be free abelian pro-$p$. Hence, by Theorem \ref{thm:two-generated-pro-p-raag}, $H$ must be a free pro-$p$ group. But a nonabelian free pro-$p$ group has trivial center
(see, for example, Ribes and Zalesskii~\cite[Corollary~8.7.3]{RZ10}),
a contradiction. In the next section, we construct precisely such an obstruction from every edge of label $2p^t$ with $t\geq 1$.

\section{Recognition of right-angled \texorpdfstring{$p$}{p}-parts}
\label{sec:recognition-p-parts}

We now show that right-angledness of the $p$-part of an Artin graph
is detected by the pro-$p$ completion of the associated Artin group.
We begin with the rank-two obstruction described above.

\subsection{A rank-two pro-\texorpdfstring{$p$}{p} obstruction}

Let $p$ be a prime number and let $t\geq 1$. We denote by
\[
D_{p^t}
=
\langle a,b\mid (ab)^{p^t}=(ba)^{p^t}\rangle
\]
the dihedral Artin group associated with a single edge of label
$2p^t$.

\begin{lemma}\label{lem:dihedral-pro-p-obstruction}
The pro-$p$ completion $\widehat{D_{p^t}}^{(p)}$ is topologically
two-generated, nonabelian, and has nontrivial center.
\end{lemma}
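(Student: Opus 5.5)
Three properties need checking: topological two-generation, nontriviality of the center, and non-commutativity.

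\emph{Two-generation.} This is immediate. The images of $a$ and $b$ generate $D_{p^t}$, so they generate a dense subgroup of $\widehat{D_{p^t}}^{(p)}$ under the canonical map.

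\emph{Nontrivial center.} Set $z=(ab)^{p^t}$. In $D_{p^t}$ we have $aza^{-1}=a(ba)^{p^t}a^{-1}=(ab)^{p^t}=z$, and similarly $bzb^{-1}=z$, so $z$ is central in $D_{p^t}$. Its image is central in the dense image of $D_{p^t}$, hence central in $\widehat{D_{p^t}}^{(p)}$ by continuity. To see that this image is nontrivial, we use the abelianization $D_{p^t}\to\ZZ^2$, $a\mapsto e_1$, $b\mapsto e_2$ (well defined, since the relation becomes trivial after abelianizing). Then $z\mapsto p^t(e_1+e_2)$. Composing with $\ZZ^2\to(\ZZ/p^{t+1})^2$ gives a finite $p$-quotient in which $z$ has nontrivial image. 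Therefore $z\neq 1$ in the pro-$p$ completion, and the center is nontrivial.

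\emph{Nonabelian.} This is the step that needs a construction. We need a finite $p$-group $P$ together with elements $x,y\in P$ such that $xy\neq yx$ and $(xy)^{p^t}=(yx)^{p^t}$. Any such pair gives a homomorphism $D_{p^t}\to P$ whose image is nonabelian, which forces $\widehat{D_{p^t}}^{(p)}$ to be nonabelian.

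The plan is to take $x$ and $y$ of order dividing $p^t$ and to have $xy$ and $yx$ of order dividing $p^t$ as well, so that both sides of the relation equal $1$. Two candidates:
\begin{itemize}
\item[(a)] Let $P=(\ZZ/p^t)\wr(\ZZ/p)$, or more generally the Sylow $p$-subgroup of $S_{p^{t+1}}$, which is nonabelian of exponent $p^{t+1}$. This exponent is too large to force the relation automatically.
\item[(b)] Let $P$ be the Heisenberg group of order $p^3$, with $x,y$ two noncommuting generators.
\end{itemize}
Candidate (b) is the simplest choice. For $p$ odd it has exponent $p$, so $(xy)^{p^t}=1=(yx)^{p^t}$ since $t\geq1$. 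For $p=2$ we take the dihedral group of order $8$ instead. Here $x,y$ are reflections, $xy$ is a rotation of order $4$, and $(xy)^{2^t}$ and $(yx)^{2^t}=(xy)^{-2^t}$ are both equal to $r^{\pm2^t}$, where $r=xy$. These agree, since $r^{2}$ is central of order $2$ and therefore $r^{2^t}=r^{-2^t}$ for $t\ge1$. In either case the map $a\mapsto x$, $b\mapsto y$ respects the defining relation and has nonabelian image.

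The main thing to verify is this explicit choice of target group in each parity of $p$. The central-element computation and the two-generation argument are routine.
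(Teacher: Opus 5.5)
Your proof is correct and follows the paper's route. You use the central element $z=(ab)^{p^t}=(ba)^{p^t}$, detect it in an abelian $p$-quotient, make it central in the completion by density, and get nonabelianity from a Heisenberg-group quotient; your $D_8$ for $p=2$ is exactly $U_3(\mathbb F_2)$. The only difference is that the paper first substitutes $s=ba$, turning the relation into $[a,s^{p^t}]=1$, so that $a\mapsto I+E_{12}$, $s\mapsto I+E_{23}$ defines the quotient uniformly for every prime, with no parity split or exponent argument needed.
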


\begin{proof}
Set $s=ba$. Then $b=sa^{-1}$ and $ab=asa^{-1}$, so the defining relation is equivalent to
\[
as^{p^t}a^{-1}=s^{p^t}.
\]
Therefore,
\[
D_{p^t}\cong\langle a,s\mid [a,s^{p^t}]=1\rangle,
\]
and the images of $a$ and $s$ in the canonical homomorphism $i:D_{p^t}\to\widehat{D_{p^t}}^{(p)}$ topologically generate the completion. In particular, $z=s^{p^t}$ is central in $D_{p^t}$.

We first show that the image of $z$ in
$\widehat{D_{p^t}}^{(p)}$ is nontrivial. Let
$C_{p^{t+1}}=\langle c\rangle$ be the cyclic group of order $p^{t+1}$. Consider the homomorphism $f:D_{p^t}\to C_{p^{t+1}}$ defined by taking $f(a)=1$ and $f(s)=c$. Then $f(z)=c^{p^t}\neq 1$. Thus $z$ survives in a
finite $p$-group quotient and has nontrivial image $\overline z$ in
$\widehat{D_{p^t}}^{(p)}$. Since $z \in Z(D_{p^t})$, for every $g \in D_{p^t}$ we have $[\bar z,i(g)]=i([z,g])=1$. Hence $i(D_{p^t})\subseteq C_{\widehat{D_{p^t}}^{(p)}}(\bar z).$

The centralizer $C_{\widehat{D_{p^t}}^{(p)}}(\bar z)$ is closed, since it is
the inverse image of $\{1\}$ under the continuous map $\widehat{D_{p^t}}^{(p)}
\to\widehat{D_{p^t}}^{(p)}$ defined by $x\mapsto [x,\bar z].$ Since $i(D_{p^t})$ is dense in $\widehat{D_{p^t}}^{(p)}$, 
$C_{\widehat{D_{p^t}}^{(p)}}(\bar z)=\widehat{D_{p^t}}^{(p)}$.
Therefore, $\bar z \in Z(\widehat{D_{p^t}}^{(p)})$.

In order to show that the completion is nonabelian, consider the finite
$p$-group
\[
U_3(\mathbb F_p)
=
\left\{
\begin{pmatrix}
1 & x & u\\
0 & 1 & y\\
0 & 0 & 1
\end{pmatrix}
\ \middle|\
x,y,u\in\mathbb F_p
\right\}.
\]
Let $X=I+E_{12}$ and $Y=I+E_{23}$. Then $X^p=Y^p=I$ but
$[X,Y]=I+E_{13}\neq I$. Since $Y^{p^t}=1$, the epimorphism $\varphi: D_{p^t}\to U_3(\mathbb{F}_p)$ defined by $\varphi(a)=X$ and $\varphi(s)=Y$ has nonabelian image. Hence, by universal property, $\widehat{D_{p^t}}^{(p)}$ is nonabelian. 
\end{proof}

\begin{remark}\label{rem:no-residual-p-needed}
The group $D_{p^t}\cong BS(p^t,p^t)$ is residually $p$;
see, for example, Moldavanskii~\cite[Theorem~2]{Mol18}.
Together with the nonabelianity and nontrivial center of $D_{p^t}$, this gives an alternative proof of Lemma~\ref{lem:dihedral-pro-p-obstruction}.
The proof above uses only two explicit finite $p$-group quotients.
\end{remark}

\subsection{Pro-$p$ recognition of right-angled $p$-parts}

\begin{theorem}[\texorpdfstring{Pro-$p$ recognition of right-angled $p$-parts}{Pro-p recognition of right-angled p-parts}]
\label{thm:pro-p-recognition-p-parts}
Let $A_\Gamma$ and $A_\Delta$ be Artin groups, and let $p$ be a
prime number. If $\widehat{A_\Gamma}^{(p)}\cong\widehat{A_\Delta}^{(p)}$, then $\Gamma_p$ is right-angled if and only if $\Delta_p$ is right-angled. Moreover, if these equivalent conditions hold, then $\Gamma_p\cong\Delta_p$ as labeled graphs.
\end{theorem}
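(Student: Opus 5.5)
By symmetry it suffices to show: if $\Delta_p$ is right-angled, then $\Gamma_p$ is right-angled. Once both are right-angled, the isomorphism follows directly. Theorem~\ref{thm:ELMP-pro-p} gives
\[
\widehat{A_{\Gamma_p}}^{(p)}\cong\widehat{A_\Gamma}^{(p)}\cong\widehat{A_\Delta}^{(p)}\cong\widehat{A_{\Delta_p}}^{(p)},
\]
and Theorem~\ref{thm:KW-pro-p-rigidity} applied to the RAAGs $A_{\Gamma_p}$ and $A_{\Delta_p}$ yields $\Gamma_p\cong\Delta_p$ as graphs. Since all labels are $2$, this is an isomorphism of labeled graphs.

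For the main implication, assume $\Delta_p$ is right-angled. Write $G=\widehat{A_{\Delta_p}}^{(p)}$, which is a pro-$p$ RAAG, and fix an isomorphism $\widehat{A_{\Gamma_p}}^{(p)}\cong G$. We may take it to be topological by the Nikolov--Segal remark, or simply because the isomorphisms produced above are continuous. Suppose for contradiction that $\Gamma_p$ has an edge $\{u,v\}$ with label $2p^t$, $t\ge1$. Since $\Gamma_p$ is even, Lemma~\ref{lem:standard-retract-even} shows that the standard parabolic subgroup $A_{\{u,v\}}$ is a retract of $A_{\Gamma_p}$. The full subgraph on $\{u,v\}$ is a single edge labeled $2p^t$, so $A_{\{u,v\}}\cong D_{p^t}$; this uses the van der Lek/Paris identification of standard parabolics, or directly the retraction, which makes the natural map from $D_{p^t}$ injective. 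By Lemma~\ref{lem:pro-p-completion-retract}, $\widehat{D_{p^t}}^{(p)}$ embeds as a closed subgroup $H$ of $\widehat{A_{\Gamma_p}}^{(p)}$, and hence of $G$.

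By Lemma~\ref{lem:dihedral-pro-p-obstruction}, $H$ is topologically two-generated, nonabelian, and has nontrivial center. Being a closed subgroup of a pro-$p$ group, it is closed in $G$. Theorem~\ref{thm:two-generated-pro-p-raag} then says $H$ is free pro-$p$ or free abelian pro-$p$. It cannot be abelian, and a nonabelian free pro-$p$ group has trivial center, as noted after that theorem. This contradiction shows every edge of $\Gamma_p$ has label $2p^0=2$, so $\Gamma_p$ is right-angled.

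The only step needing care is the transfer of the closed subgroup $H$ along the chain of isomorphisms. These isomorphisms must be homeomorphisms so that closedness and topological generation are preserved. The isomorphism from Theorem~\ref{thm:ELMP-pro-p} is induced by a group homomorphism and is therefore continuous. The given isomorphism $\widehat{A_\Gamma}^{(p)}\cong\widehat{A_\Delta}^{(p)}$ is continuous because these groups are finitely generated pro-$p$ groups, to which Nikolov--Segal applies, or by assumption. Everything else is direct citation of earlier results.
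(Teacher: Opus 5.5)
Your proof is correct and follows the same route as the paper. It uses Theorem~\ref{thm:ELMP-pro-p} to pass to $p$-parts, the even retraction to embed $\widehat{D_{p^t}}^{(p)}$ as a closed topologically two-generated subgroup with nontrivial center, Lemma~\ref{lem:dihedral-pro-p-obstruction} against Theorem~\ref{thm:two-generated-pro-p-raag}, and Kropholler--Wilkes for the final isomorphism; the only differences are cosmetic (you run the symmetry in the opposite direction and make explicit the continuity of the isomorphisms, which the paper leaves implicit).
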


\begin{proof}
By the theorem of Escart\'in Ferrer--Leoni--Mart\'inez P\'erez, stated as Theorem~\ref{thm:ELMP-pro-p}, we have
\[
\widehat{A_{\Gamma_p}}^{(p)}
\cong
\widehat{A_\Gamma}^{(p)}
\cong
\widehat{A_\Delta}^{(p)}
\cong
\widehat{A_{\Delta_p}}^{(p)}.
\]

Thus, we only need to show that if $\Gamma_p$ is right-angled, then $\Delta_p$ must also be right-angled. Suppose, towards a contradiction, that $\Delta_p$ is not
right-angled. Then it must contain at least one edge $e\in E(\Delta_p)$ labeled by $2p^t$ for some $t\geq 1$. The corresponding rank-two standard parabolic subgroup $A_e$ is isomorphic to $D_{p^t}$. Since $A_{\Delta_p}$ is even,
by Lemma~\ref{lem:standard-retract-even}, $A_e$ is a
retract of $A_{\Delta_p}$. Then we have a closed embedding
\[
\widehat{D_{p^t}}^{(p)}
\hookrightarrow
\widehat{A_{\Delta_p}}^{(p)}
\cong
\widehat{A_{\Gamma_p}}^{(p)}.
\]
By Lemma~\ref{lem:dihedral-pro-p-obstruction}, $\widehat{D_{p^t}}^{(p)}$ is topologically two-generated, nonabelian, and has nontrivial center. It is therefore neither free abelian pro-$p$ nor free pro-$p$, contradicting Theorem~\ref{thm:two-generated-pro-p-raag}. Therefore, $\Delta_p$ is right-angled. 

When both $p$-parts are right-angled, by Theorem~\ref{thm:KW-pro-p-rigidity}, we have $\Gamma_p\cong\Delta_p$.
\end{proof}

\begin{corollary}\label{cor:recover-all-p-parts}
Let $A_\Gamma$ be an arbitrary Artin group and let $A_\Lambda$ be a
right-angled Artin group. If
$\widehat{A_\Gamma}\cong\widehat{A_\Lambda}$, then $\Gamma_p\cong\Lambda$
for every prime $p$.
\end{corollary}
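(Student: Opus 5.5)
The plan is to chain together the results already established, one prime at a time. Fix a prime $p$. By Lemma~\ref{lem:maximal-pro-p-quotient}, the isomorphism $\widehat{A_\Gamma}\cong\widehat{A_\Lambda}$ induces an isomorphism of pro-$p$ completions
\[
\widehat{A_\Gamma}^{(p)}\cong\widehat{A_\Lambda}^{(p)}.
\]
We then apply Theorem~\ref{thm:pro-p-recognition-p-parts} with $\Delta=\Gamma$ and with $\Lambda$ in the role of the other Artin group. For this we need two observations about the $p$-part of the right-angled graph $\Lambda$.

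First, we check that $\Lambda_p=\Lambda$ as labeled graphs. Since $\Lambda$ is right-angled, it has no odd-labeled edges, so step (1) of the construction of $\Lambda_p$ does nothing. Every label is $2=2\cdot 1\cdot p^0$ with $\gcd(1,p)=1$, so step (2) replaces $2$ by $2p^0=2$ and leaves the labels unchanged. No parallel edges are created, so step (3) does nothing. Hence $\Lambda_p=\Lambda$, and in particular $\Lambda_p$ is right-angled.

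With this in hand, Theorem~\ref{thm:pro-p-recognition-p-parts} gives two conclusions. Its first part shows that $\Gamma_p$ is right-angled, since $\Lambda_p$ is. Its second part then gives $\Gamma_p\cong\Lambda_p=\Lambda$ as labeled graphs. Since $p$ was arbitrary, $\Gamma_p\cong\Lambda$ for every prime $p$, which is the claim.

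There is no serious obstacle here. The only point needing care is the bookkeeping in the identification $\Lambda_p=\Lambda$, specifically confirming that the normalization in step (2) fixes the label $2$ for every prime, including $p=2$. In that case $2=2\cdot1\cdot 2^0$ with $t=0$, so the label stays $2$.
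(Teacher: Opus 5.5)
Your proposal is correct and matches the paper's proof: pass to pro-$p$ completions via Lemma~\ref{lem:maximal-pro-p-quotient}, note that $\Lambda_p=\Lambda$ is right-angled, and apply Theorem~\ref{thm:pro-p-recognition-p-parts}. Your explicit check that step (2) fixes the label $2$ for every prime, including $p=2$, simply spells out what the paper asserts "by construction."
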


\begin{proof}
If $\widehat{A_\Gamma}\cong\widehat{A_\Lambda}$, then $\widehat{A_\Gamma}^{(p)}\cong\widehat{A_\Lambda}^{(p)}$. By construction, $\Lambda_p=\Lambda$ that is right-angled. So by Theorem~\ref{thm:pro-p-recognition-p-parts} , we have $\Gamma_p\cong\Lambda$ for all $p$.
\end{proof}

Theorem~\ref{thm:pro-p-recognition-p-parts} also yields two relative
rigidity statements which require only pro-$p$ information. For a
prime number $p$, we say that a graph $\Gamma$ is
\emph{$p$-primary} if each edge of $\Gamma$ has label of the form
$2p^t$ for some $t\geq 0$.

\begin{corollary}[Pro-$p$ rigidity of $p$-primary Artin groups relative to RAAGs]
\label{cor:p-primary-relative-pro-p-rigidity}
Let $p$ be a prime number, let $A_\Gamma$ be an Artin group whose defining
graph $\Gamma$ is $p$-primary, and let $A_\Lambda$ be a right-angled
Artin group. If
$\widehat{A_\Gamma}^{(p)}\cong\widehat{A_\Lambda}^{(p)},$
then $\Gamma$ is right-angled and $\Gamma\cong\Lambda$. In
particular, $A_\Gamma\cong A_\Lambda$.
\end{corollary}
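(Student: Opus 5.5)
The plan is to reduce the statement to Theorem~\ref{thm:pro-p-recognition-p-parts} by observing that a $p$-primary graph is unchanged by passing to its $p$-part. First, since $\Gamma$ is $p$-primary, every edge label has the form $2p^t$ with $t\geq 0$. In particular every label is even, so step (1) of the construction of $\Gamma_p$ identifies no vertices and removes nothing. Step (2) replaces $2kp^t$ (with $\gcd(k,p)=1$) by $2p^t$; for a label $2p^t$ we have $k=1$, so nothing changes. Since no vertices were identified, no parallel edges arise and step (3) is vacuous. Hence $\Gamma_p=\Gamma$ as labeled graphs. Similarly $\Lambda_p=\Lambda$, because every label $2=2\cdot p^0$ is already of the required form and $\Lambda$ has no odd edges.

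Next, we apply Theorem~\ref{thm:pro-p-recognition-p-parts} with $\Delta=\Lambda$. The hypothesis $\widehat{A_\Gamma}^{(p)}\cong\widehat{A_\Lambda}^{(p)}$ is exactly what that theorem requires. Since $\Lambda_p=\Lambda$ is right-angled, the theorem gives that $\Gamma_p$ is right-angled and $\Gamma_p\cong\Lambda_p$. Substituting $\Gamma_p=\Gamma$ and $\Lambda_p=\Lambda$ yields that $\Gamma$ is right-angled and $\Gamma\cong\Lambda$ as labeled graphs.

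Finally, an isomorphism of labeled defining graphs induces an isomorphism of the presentations, so $A_\Gamma\cong A_\Lambda$.

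There is no real obstacle here: all the substantive input (the theorem of Escart\'in Ferrer--Leoni--Mart\'inez P\'erez, the two-generated subgroup theorem of Casals-Ruiz--Pintonello--Zalesskii, and Kropholler--Wilkes) is already packaged in Theorem~\ref{thm:pro-p-recognition-p-parts}. The only point needing care is the verification that $\Gamma_p=\Gamma$ for $p$-primary $\Gamma$. This uses that $p$-primary labels are even, so no collapse or merging of edges occurs, and that the $p$-adic normalization of step (2) fixes labels of the form $2p^t$.
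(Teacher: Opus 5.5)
Your proposal is correct and follows the paper's proof: you check that $\Gamma_p=\Gamma$ (no odd edges to collapse, labels $2p^t$ unchanged by the normalization, and no parallel edges because $\Gamma$ is simplicial) and that $\Lambda_p=\Lambda$, then apply Theorem~\ref{thm:pro-p-recognition-p-parts} with $\Delta=\Lambda$. You verify each step of the $p$-part construction more explicitly than the paper does, but the argument is the same.
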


\begin{proof}
Since every edge label of $\Gamma$ has the form $2p^t$, no vertices
are identified and no edge labels are changed in the construction of
$\Gamma_p$. So $\Gamma_p=\Gamma$. Since $\Lambda$ is right-angled,
we also have $\Lambda_p=\Lambda$. The conclusion then follows from Theorem~\ref{thm:pro-p-recognition-p-parts}.
\end{proof}

\begin{corollary}[Relative rigidity from pro-$p$ completions]
\label{cor:even-all-pro-p-rigidity}
Let $A_\Gamma$ be an arbitrary even Artin group and let $A_\Lambda$ be a
right-angled Artin group. If $\widehat{A_\Gamma}^{(p)}\cong\widehat{A_\Lambda}^{(p)}$
for every prime $p$, then $\Gamma$ is right-angled and
$\Gamma\cong\Lambda$. In particular,
$A_\Gamma\cong A_\Lambda$.
\end{corollary}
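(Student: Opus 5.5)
Since $\Gamma$ is even, step (1) in the construction of $\Gamma_p$ identifies no vertices, so for every prime $p$ the graph $\Gamma_p$ has the same vertex set and the same edge set as $\Gamma$. No parallel edges arise, so step (3) does nothing. The only change is in step (2): each label $2m$ becomes $2p^{v_p(m)}$, where $v_p$ denotes the $p$-adic valuation.

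The plan is to apply Theorem~\ref{thm:pro-p-recognition-p-parts} separately for each prime $p$, with $\Delta=\Lambda$. Since $\Lambda$ is right-angled, $\Lambda_p=\Lambda$ is right-angled. The hypothesis $\widehat{A_\Gamma}^{(p)}\cong\widehat{A_\Lambda}^{(p)}$ then shows that $\Gamma_p$ is right-angled and that $\Gamma_p\cong\Lambda$. This is exactly the argument of Corollary~\ref{cor:recover-all-p-parts}, except that we now use only the pro-$p$ hypothesis.

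Next, fix an edge of $\Gamma$ with label $2m$, where $m\ge 1$. For each prime $p$, this edge appears in $\Gamma_p$ with label $2p^{v_p(m)}$. Because $\Gamma_p$ is right-angled, $v_p(m)=0$. This holds for every prime $p$, so $m=1$ and every label of $\Gamma$ equals $2$. Hence $\Gamma$ is right-angled, and $\Gamma=\Gamma_p$ for any prime $p$ (for instance $p=2$). Therefore $\Gamma\cong\Lambda$, and $A_\Gamma\cong A_\Lambda$.

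The argument reduces entirely to earlier results. The only point needing care is the observation that evenness prevents any collapsing, so that the edges of $\Gamma_p$ correspond bijectively to the edges of $\Gamma$ as $p$ varies. This bijection is what lets us read off all the valuations $v_p(m)$ simultaneously and conclude $m=1$.
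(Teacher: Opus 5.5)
Your proposal is correct and follows essentially the same route as the paper. You apply Theorem~\ref{thm:pro-p-recognition-p-parts} with $\Delta=\Lambda$ to make every $\Gamma_p$ right-angled, and you use evenness and simpliciality to see that each edge labeled $2m$ survives in $\Gamma_p$ with label $2p^{v_p(m)}$; the paper does the same, except that it argues by contradiction using a single prime $p \mid m$, whereas you conclude $v_p(m)=0$ for all $p$ directly.
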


\begin{proof}
Since $\Lambda_p=\Lambda$ for every prime $p$, by Theorem~\ref{thm:pro-p-recognition-p-parts}, $\Gamma_p\cong\Lambda$ for every prime $p$, i.e., every $\Gamma_p$ is right-angled.

Now suppose that there is an edge $e\in E(\Gamma)$ labelled by $2m>2$. Choose a prime
$p|m$ such that $m=kp^t$ with $t\ge 1$ and $\gcd(k,p)=1$. Since $\Gamma$ is even, no vertices will be identified in the construction of $\Gamma_p$. Since $\Gamma$ is simplicial, no multiple edges arise. Therefore, $e$ gives rise to an edge of $\Gamma_p$ labeled by $2p^t>2$, contradicting that $\Gamma_p$ is right-angled.

Therefore, every edge of $\Gamma$ has label $2$, and $\Gamma$ is right-angled. Consequently, $\Gamma_p=\Gamma$ for every prime $p$ and we have $\Gamma\cong\Lambda$.
\end{proof}

\begin{remark}\label{rem:finitely-many-primes-suffice}
In Corollary~\ref{cor:even-all-pro-p-rigidity}, it is not necessary to consider every prime. Actually, it suffices to choose a set of primes $S$ such that, for every edge label $2m>2$ of $\Gamma$, some prime $p\in S$ divides $m$, and to assume that
\[
\widehat{A_\Gamma}^{(p)}
\cong
\widehat{A_\Lambda}^{(p)}
\]
for every $p\in S$. Since $\Gamma$ has finitely many edges, the primes
in $S$ needed to detect its edge labels may be chosen to form a
finite set.
\end{remark}

\section{Detecting odd-labeled edges}
\label{sec:detecting-odd-edges}

The preceding section recovers the graph obtained from $\Gamma$ by
collapsing its odd-labeled components. Under the assumption that
$\widehat{A_\Gamma}\cong\widehat{A_\Lambda}$, where $A_\Lambda$ is
right-angled, we now use finite quotients to show that $\Gamma$ cannot
contain any odd-labeled edges.

\subsection{The odd-collapse graph}

Define an equivalence relation $\sim_{\mathrm{odd}}$ on
$V(\Gamma)$ by setting $u\sim_{\mathrm{odd}}v$ if there exists a path from $u$ to $v$ all of whose edge labels are odd. Equivalently, $\sim_{\mathrm{odd}}$ is the equivalence relation generated by the pairs of endpoints of odd-labeled edges.

For a group $G$, denote its abelianization by
\[
G^{\mathrm{ab}}=G/[G,G],
\]
where $[G,G]$ is the commutator subgroup generated by $\{g^{-1}h^{-1}gh:g,h\in G\}$.
\begin{lemma}\label{lem:artin-abelianization}
Let $A_\Gamma$ be an Artin group. Then
\[
A_\Gamma^{\mathrm{ab}}\cong\ZZ^{|V(\Gamma)/{\sim_{\mathrm{odd}}}|}.
\]
More precisely, two standard generators have the same image in
$A_\Gamma^{\mathrm{ab}}$ if and only if they are in the same
$\sim_{\mathrm{odd}}$-equivalence class. In particular,
\[
\operatorname{rank} A_\Gamma^{\mathrm{ab}}
=
|V(\Gamma_{\mathrm{odd}})|.
\]
\end{lemma}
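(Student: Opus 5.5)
We plan to compute the abelianization straight from the presentation. Abelianizing $A_\Gamma$ gives the free abelian group on $V(\Gamma)$ modulo the abelianized Artin relations, one for each edge. So the first step is to find what a single relation becomes after abelianization. Take an edge $\{s_i,s_j\}$ with label $m_{ij}$ and write the group additively, with $e_v$ denoting the image of the generator $v$.
\begin{itemize}
\item If $m_{ij}=2k$ is even, both sides of the relation contain $k$ copies of $s_i$ and $k$ copies of $s_j$. The abelianized relation is therefore trivial.
\item If $m_{ij}=2k+1$ is odd, the left side $(s_is_j)^ks_i$ contributes $(k+1)e_i+ke_j$. The right side $(s_js_i)^ks_j$ contributes $ke_i+(k+1)e_j$. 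The relation becomes $e_i=e_j$.
\end{itemize}
Hence
\[
A_\Gamma^{\mathrm{ab}}\cong \ZZ^{V(\Gamma)}\big/\langle e_u-e_v : \{u,v\}\text{ odd-labeled}\rangle .
\]

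Next we identify this quotient. Consider the homomorphism $\ZZ^{V(\Gamma)}\to\ZZ^{V(\Gamma)/\sim_{\mathrm{odd}}}$ sending $e_v$ to the basis vector of the class $[v]$. It is surjective and kills every relator $e_u-e_v$ coming from an odd edge, so it induces a surjection $\pi$ from $A_\Gamma^{\mathrm{ab}}$ onto $\ZZ^{V(\Gamma)/\sim_{\mathrm{odd}}}$.

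For the inverse, define a map back by sending the basis vector of a class $[v]$ to $e_v$. This is well defined because $\sim_{\mathrm{odd}}$ is generated by the endpoint pairs of odd edges. Whenever $u\sim_{\mathrm{odd}}v$, there is an odd path $u=v_0,\dots,v_r=v$, and chaining the relations $e_{v_{l-1}}=e_{v_l}$ gives $e_u=e_v$ in $A_\Gamma^{\mathrm{ab}}$. The two maps are mutually inverse on generators. This proves the isomorphism $A_\Gamma^{\mathrm{ab}}\cong\ZZ^{|V(\Gamma)/\sim_{\mathrm{odd}}|}$, and therefore the rank statement, reading $V(\Gamma_{\mathrm{odd}})$ as $V(\Gamma)/\sim_{\mathrm{odd}}$ per the odd-collapse graph.

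Finally, the \emph{more precisely} clause follows by comparing images of generators.
\begin{itemize}
\item If $u\sim_{\mathrm{odd}}v$, the chaining argument above already gives $e_u=e_v$ in $A_\Gamma^{\mathrm{ab}}$.
\item If $u\not\sim_{\mathrm{odd}}v$, then $\pi(e_u)$ and $\pi(e_v)$ are distinct basis vectors of a free abelian group. Since $\pi$ is an isomorphism, $e_u\neq e_v$.
\end{itemize}

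There is no real obstacle here. The only point needing care is the odd case of the letter count, where one side has one extra $s_i$ and the other one extra $s_j$.
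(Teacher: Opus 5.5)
Your proof is correct and follows the same route as the paper. Both abelianize the presentation, observe that even-labeled relations become trivial while odd-labeled ones force $e_u=e_v$, and identify the resulting quotient with the free abelian group on the $\sim_{\mathrm{odd}}$-classes. You are more explicit than the paper in writing down the inverse map and in deriving the ``more precisely'' clause (distinct classes have distinct images); the paper states both in a single sentence.
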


\begin{proof}
In the abelianization, an Artin relation associated with an edge
$\{u,v\}\in E(\Gamma)$ of even label $2k$ becomes tautological, since both sides
contain $k$ copies of $u$ and $k$ copies of $v$. If the edge has odd
label $2k+1$, the corresponding relation becomes
\[
(k+1)u+kv=ku+(k+1)v,
\]
so $u=v$. Thus
\[
A_\Gamma^{\mathrm{ab}}\cong\ZZ^{|V(\Gamma)|}
/
\langle u-v\mid \{u,v\}\in E(\Gamma)\text{ has odd label}\rangle.
\]
The quotient identifies precisely the vertices lying in the same
$\sim_{\mathrm{odd}}$-equivalence class. Therefore, 
\[
A_\Gamma^{\mathrm{ab}}\cong\ZZ^{|V(\Gamma)/{\sim_{\mathrm{odd}}}|}.
\]

\end{proof}

\begin{definition}\label{def:odd-collapse}
The \emph{odd-collapse graph} $\Gamma_{\mathrm{odd}}$ is defined as the finite simplicial graph whose vertices are the $\sim_{\mathrm{odd}}$-equivalence classes. Two distinct classes
$C$ and $D$ are connected by an edge if $\Gamma$ contains an edge with
one endpoint in $C$ and the other in $D$. We then discard loops, merge parallel edges, and assign label $2$ to every edge of
$\Gamma_{\mathrm{odd}}$, so that $\Gamma_{\mathrm{odd}}$ is
right-angled.
\end{definition}

\begin{example}\label{ex:odd-collapse}
Figure~\ref{fig:odd-collapse} illustrates the construction of
$\Gamma_{\mathrm{odd}}$. The odd-labeled components of $\Gamma$ are exactly
\[
\{a,b,c\},\qquad \{d,e\},\qquad \{f,g\},\qquad \{h\},
\]
so the vertices of $\Gamma_{\mathrm{odd}}$ are as follows:
\[
x_1=[a]_{\mathrm{odd}}=[b]_{\mathrm{odd}}=[c]_{\mathrm{odd}},\quad
x_2=[d]_{\mathrm{odd}}=[e]_{\mathrm{odd}},\quad
x_3=[f]_{\mathrm{odd}}=[g]_{\mathrm{odd}},\quad
x_4=[h]_{\mathrm{odd}}.
\]
\end{example}

\begin{figure}[ht]
\centering
\begin{tikzpicture}[
    x=1cm,
    y=1cm,
    >={Stealth[length=5pt,width=4pt]},
    vertex/.style={
        circle,
        fill=black,
        inner sep=1.75pt
    },
    oddedge/.style={
        red!75!black,
        line width=0.9pt,
        line cap=round
    },
    evenedge/.style={
        blue!75!black,
        line width=0.9pt,
        line cap=round
    },
    collapseedge/.style={
        teal!70!black,
        line width=0.9pt,
        line cap=round
    },
    lab/.style={
        fill=white,
        inner sep=1pt,
        font=\small
    },
    every label/.style={
        font=\small,
        inner sep=1pt
    }
]

\begin{scope}

    \node[vertex,label=above:$a$] (a) at (0,2.25) {};
    \node[vertex,label=left:$b$]  (b) at (-1.2,1.15) {};
    \node[vertex,label=below:$c$] (c) at (0,0) {};

    \node[vertex,label=above:$d$] (d) at (3.8,2.05) {};
    \node[vertex,label=below:$e$] (e) at (3.8,0.55) {};

    \node[vertex,label=above:$f$] (f) at (6.8,2.05) {};
    \node[vertex,label=below:$g$] (g) at (6.8,0.55) {};

    \node[vertex,label=right:$h$] (h) at (9.4,1.25) {};

    \draw[oddedge]
        (a) -- (b)
        node[midway,left,lab] {$3$};

    \draw[oddedge]
        (b) -- (c)
        node[midway,left,lab] {$5$};

    \draw[oddedge]
        (d) -- (e)
        node[midway,right,lab] {$7$};

    \draw[oddedge]
        (f) -- (g)
        node[midway,right,lab] {$3$};

    \draw[evenedge]
        (a) -- (d)
        node[pos=.50,above,lab] {$4$};

    \draw[evenedge]
        (c) -- (e)
        node[pos=.50,below,lab] {$6$};

    \draw[evenedge]
        (b) -- (f)
        node[pos=.51,above,sloped,lab] {$8$};

    \draw[evenedge]
        (e) -- (h)
        node[pos=.52,above,sloped,lab] {$10$};

    \draw[evenedge]
        (g) -- (h)
        node[pos=.54,below,sloped,lab] {$12$};

    \draw[evenedge]
        (c)
        .. controls (2.9,-0.55) and (6.7,-0.45) ..
        node[pos=.53,below=2pt,lab] {$14$}
        (h);

    \node[font=\normalsize] at (4.35,-1.12) {$\Gamma$};

\end{scope}

\draw[->,line width=0.8pt]
    (10.75,1.15) -- (12.15,1.15);

\begin{scope}[shift={(14.15,0.15)}]

    \node[vertex,label=above:$x_1$] (x1) at (0,2.10) {};
    \node[vertex,label=left:$x_2$]  (x2) at (-1.20,1.05) {};
    \node[vertex,label=right:$x_3$] (x3) at (1.20,1.05) {};
    \node[vertex,label=below:$x_4$] (x4) at (0,0) {};

    \draw[collapseedge] (x1) -- (x2);
    \draw[collapseedge] (x1) -- (x3);
    \draw[collapseedge] (x1) -- (x4);
    \draw[collapseedge] (x2) -- (x4);
    \draw[collapseedge] (x3) -- (x4);

    \node[font=\normalsize] at (0,-1.12) {$\Gamma_{\mathrm{odd}}$};

\end{scope}

\end{tikzpicture}

\caption{An example of the odd-collapse graph.}
\label{fig:odd-collapse}
\end{figure}

\begin{proposition}\label{prop:odd-collapse-properties}
The odd-collapse graph has the following properties.
\begin{enumerate}
\item For every prime $p$, the underlying simplicial graph of
$\Gamma_p$ is canonically isomorphic to
$\Gamma_{\mathrm{odd}}$. If $\Gamma_p$ is right-angled, then this is
an isomorphism of labeled graphs.

\item The assignment $v\mapsto[v]_{\mathrm{odd}}$ on the standard
generators induces a canonical epimorphism
\[
\pi_\Gamma:A_\Gamma\twoheadrightarrow A_{\Gamma_{\mathrm{odd}}}.
\]
\end{enumerate}
\end{proposition}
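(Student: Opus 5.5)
For part (1), I will compare the construction of $\Gamma_p$ with Definition~\ref{def:odd-collapse} step by step. Step (1) in the definition of $\Gamma_p$ identifies the endpoints of every odd-labeled edge. The resulting vertex set is therefore the quotient of $V(\Gamma)$ by the equivalence relation generated by these identifications, which is exactly $V(\Gamma)/{\sim_{\mathrm{odd}}}=V(\Gamma_{\mathrm{odd}})$.

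Next I will check the edges. An edge of $\Gamma_p$ between distinct classes $C\neq D$ arises precisely when some edge of $\Gamma$ joins a vertex of $C$ to a vertex of $D$. Such an edge must be even-labeled, since an odd-labeled edge would force $C=D$. Step (2) only relabels edges, and step (3) merges parallel edges into one, so two distinct classes are adjacent in $\Gamma_p$ if and only if they are adjacent in $\Gamma_{\mathrm{odd}}$. Loops, which come from edges with both endpoints in one class, are discarded in both constructions. Hence the identity on classes $[v]\mapsto[v]_{\mathrm{odd}}$ is an isomorphism of underlying simplicial graphs, and it is canonical. If $\Gamma_p$ is right-angled, every label of $\Gamma_p$ is $2$, which matches the labels of $\Gamma_{\mathrm{odd}}$, so the isomorphism is one of labeled graphs.

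For part (2), I will define $\pi_\Gamma$ on generators by $v\mapsto[v]_{\mathrm{odd}}$ and verify that it respects each defining relation of $A_\Gamma$. There are three cases for an edge $\{u,v\}$ with label $m$.
\begin{enumerate}
\item If $m$ is odd, then $u\sim_{\mathrm{odd}}v$, so both sides of the relation map to the same word $x^m$ with $x=[u]_{\mathrm{odd}}$, and the relation holds.
\item If $m$ is even and $[u]_{\mathrm{odd}}=[v]_{\mathrm{odd}}=x$, then both sides again map to $x^m$.
\item If $m=2k$ is even and $[u]_{\mathrm{odd}}=x\neq y=[v]_{\mathrm{odd}}$, then $\{x,y\}$ is an edge of $\Gamma_{\mathrm{odd}}$ labeled $2$, so $xy=yx$ in $A_{\Gamma_{\mathrm{odd}}}$. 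Consequently $(xy)^k=(yx)^k$, and the relation holds.
\end{enumerate}
Thus $\pi_\Gamma$ is a well-defined homomorphism. It is surjective because every generator $[v]_{\mathrm{odd}}$ of $A_{\Gamma_{\mathrm{odd}}}$ is the image of $v$.

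The only point requiring care is the vertex-set identification in part (1): one must check that iterated identifications along odd edges yield exactly the equivalence classes of $\sim_{\mathrm{odd}}$. This holds because $\sim_{\mathrm{odd}}$ is, by definition, the equivalence relation generated by the endpoint pairs of odd-labeled edges.
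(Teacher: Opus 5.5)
Your proposal is correct and follows essentially the same argument as the paper. In (1) you identify the vertex set through the classes of $\sim_{\mathrm{odd}}$ and note that relabeling and merging parallel edges do not change adjacency; in (2) you check each defining relation, and your first two cases together form the paper's single case $u\sim_{\mathrm{odd}}v$.
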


\begin{proof}
\begin{enumerate}

\item[(1)]
Let
\[
q: V(\Gamma)\to V(\Gamma)/{\sim_{\mathrm{odd}}}=V(\Gamma_{\mathrm{odd}})
\]
be the quotient map. The first step in the construction of
$\Gamma_p$ identifies exactly the vertices in each fiber of $q$.
Thus the vertex set $V(\Gamma_p)$ is naturally identified with
$V(\Gamma_{\mathrm{odd}})$.

Let $C$ and $D$ be distinct equivalence classes. After the first
step, there is an edge joining $C$ and $D$ if and only if $\Gamma$
contains an edge with one endpoint in $C$ and the other in $D$. The
second step changes only edge labels, while the third replaces a
nonempty collection of parallel edges by a single edge. Neither step
changes adjacency. Hence $C$ and $D$ are adjacent in $\Gamma_p$ if
and only if they are adjacent in $\Gamma_{\mathrm{odd}}$. This gives
a canonical isomorphism between the underlying simplicial graphs of
$\Gamma_p$ and $\Gamma_{\mathrm{odd}}$.

If $\Gamma_p$ is right-angled, then every edge of $\Gamma_p$ has
label $2$. Since every edge of $\Gamma_{\mathrm{odd}}$ is also
assigned label $2$, this canonical isomorphism preserves labels.

\item[(2)]
Consider the assignment on the standard generators $v\mapsto [v]_{\mathrm{odd}}$ for $v\in V(\Gamma)$. Let $\{v,w\}\in E(\Gamma)$ be labeled by $m$. If
$v\sim_{\mathrm{odd}}w$, then
$[v]_{\mathrm{odd}}=[w]_{\mathrm{odd}}$, and both alternating words
of length $m$ map to $[v]_{\mathrm{odd}}^m.$

Now suppose that $v\not\sim_{\mathrm{odd}}w$. Then $m=2k$ for some $k\in\NN$. By construction, $[v]_{\mathrm{odd}}$ and
$[w]_{\mathrm{odd}}$ are joined by an edge in $\Gamma_{\mathrm{odd}}$, and
so
\[
[v]_{\mathrm{odd}}[w]_{\mathrm{odd}}=[w]_{\mathrm{odd}}[v]_{\mathrm{odd}}\in A_{\Gamma_{\mathrm{odd}}}.
\]
Hence both alternating words of length $2k$ map to
$[v]_{\mathrm{odd}}^k[w]_{\mathrm{odd}}^k.$
Therefore, every defining relation of $A_\Gamma$ is preserved and the
assignment induces a homomorphism
\[
\pi_\Gamma\colon
A_\Gamma\longrightarrow A_{\Gamma_{\mathrm{odd}}}.
\]
It is surjective because every standard generator of
$A_{\Gamma_{\mathrm{odd}}}$ is of the form
$[v]_{\mathrm{odd}}$ for some $v\in V(\Gamma)$.

\end{enumerate}
\end{proof}
\begin{remark}
The canonical epimorphism
$\pi_\Gamma:A_\Gamma\twoheadrightarrow A_{\Gamma_{\mathrm{odd}}}$
induces an isomorphism on abelianizations. Indeed,
\[
A_\Gamma^{\mathrm{ab}}
\cong
\ZZ^{|V(\Gamma_{\mathrm{odd}})|}
\cong
A_{\Gamma_{\mathrm{odd}}}^{\mathrm{ab}}.
\]
Thus $\ker(\pi_\Gamma)\subseteq[A_\Gamma,A_\Gamma]$.
The graph $\Gamma_{\mathrm{odd}}$ may therefore be viewed as an
enhancement of the abelianization: its vertices record exactly the
identifications of standard generators forced in the abelianization,
while its edges retain the adjacency information between the
resulting odd components.
\end{remark}

The preceding proposition identifies the underlying simplicial graph
of $\Gamma_p$. We now record explicitly how the edge labels of
$\Gamma_p$ are determined by the labels of $\Gamma$.

Let $C$ and $D$ be distinct adjacent vertices of
$\Gamma_{\mathrm{odd}}$, viewed as $\sim_{\mathrm{odd}}$-equivalence
classes in $V(\Gamma)$. Every edge of $\Gamma$ joining a vertex of
$C$ to a vertex of $D$ has even label and assume that these edges have labels
\[
2m_1,\ldots,2m_r,
\]
for some $m_1,\cdots,m_r\in\NN$. Now define
\[
d_\Gamma(C,D)
\coloneq
\gcd(m_1,\ldots,m_r).
\]

Given a prime number $p$ and a positive integer $m$, denote by
$v_p(m)$ the $p$-adic valuation of $m$, that is,
\[
v_p(m)=\max\{t\geq 0 : p^t\mid m\}.
\]
Equivalently, if $m=kp^t$ with $\gcd(k,p)=1$, then
$v_p(m)=t$.

Now we write $m_i=k_ip^{t_i}$ where $t_i=v_p(m_i)$ and $\gcd(k_i,p)=1$ for $1\le i\le r$. In the construction of $\Gamma_p$, the edge of $\Gamma$ labelled $2m_i=2k_ip^{t_i}$ is replaced by an edge labeled $2p^{t_i}$. After the odd-labeled components are collapsed, all of these edges join the same pair of vertices $C$ and $D$. The parallel edges are
then merged, retaining the smallest label in $\Gamma_p$. Therefore, the resulting edge
of $\Gamma_p$ has label
\[
\min_{1\leq i\leq r} 2p^{t_i}=2p^{\min \{t_i\}}=2p^{\min_i v_p(m_i)}=2p^{v_p(d_\Gamma(C,D))}.
\]

\begin{corollary}[Classification from all pro-$p$ completions]
\label{cor:all-pro-p-classification}
Let $A_\Gamma$ be an Artin group and let $A_\Lambda$ be a
right-angled Artin group. Then the following are equivalent:
\begin{enumerate}
\item $\widehat{A_\Gamma}^{(p)}\cong\widehat{A_\Lambda}^{(p)}$ for every prime $p$;

\item $\Gamma_p\cong\Lambda$ as labeled graphs for every prime $p$;

\item $\Gamma_{\mathrm{odd}}\cong\Lambda$ and $d_\Gamma(C,D)=1$ for every edge $\{C,D\}\in E(\Gamma_{\mathrm{odd}})$.
\end{enumerate}
\end{corollary}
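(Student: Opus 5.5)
We will prove the cycle of implications $(1)\Rightarrow(2)\Rightarrow(3)\Rightarrow(2)\Rightarrow(1)$. All the substantive input has already been established: Theorem~\ref{thm:pro-p-recognition-p-parts}, Theorem~\ref{thm:ELMP-pro-p}, Proposition~\ref{prop:odd-collapse-properties}, and the label formula computed just before the statement.

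\emph{$(1)\Rightarrow(2)$.} This is exactly the argument of Corollary~\ref{cor:recover-all-p-parts}. Fix $p$. Since $\Lambda$ is right-angled we have $\Lambda_p=\Lambda$, so Theorem~\ref{thm:pro-p-recognition-p-parts} applied to $\widehat{A_\Gamma}^{(p)}\cong\widehat{A_\Lambda}^{(p)}$ gives that $\Gamma_p$ is right-angled and $\Gamma_p\cong\Lambda$.

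\emph{$(2)\Leftrightarrow(3)$.} By Proposition~\ref{prop:odd-collapse-properties}(1), the underlying simplicial graph of each $\Gamma_p$ is canonically $\Gamma_{\mathrm{odd}}$. For each edge $\{C,D\}$, the computation preceding the statement shows that the label of the corresponding edge of $\Gamma_p$ is $2p^{v_p(d_\Gamma(C,D))}$.

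Assume (2). Each $\Gamma_p\cong\Lambda$ is right-angled, so Proposition~\ref{prop:odd-collapse-properties}(1) gives $\Gamma_{\mathrm{odd}}\cong\Gamma_p\cong\Lambda$ as labeled graphs. Moreover $v_p(d_\Gamma(C,D))=0$ for every prime $p$ and every edge. Since $d_\Gamma(C,D)$ is a positive integer, this forces $d_\Gamma(C,D)=1$.

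Conversely, assume (3). Then every label of every $\Gamma_p$ equals $2p^0=2$, so $\Gamma_p$ is right-angled. Hence $\Gamma_p\cong\Gamma_{\mathrm{odd}}\cong\Lambda$ as labeled graphs, again by Proposition~\ref{prop:odd-collapse-properties}(1).

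\emph{$(2)\Rightarrow(1)$.} By Theorem~\ref{thm:ELMP-pro-p}, $\widehat{A_\Gamma}^{(p)}\cong\widehat{A_{\Gamma_p}}^{(p)}$. A labeled graph isomorphism $\Gamma_p\cong\Lambda$ induces $A_{\Gamma_p}\cong A_\Lambda$, and hence $\widehat{A_{\Gamma_p}}^{(p)}\cong\widehat{A_\Lambda}^{(p)}$ by functoriality of pro-$p$ completion.

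The only step requiring any care is $(2)\Rightarrow(3)$: we must check that the isomorphism $\Gamma_{\mathrm{odd}}\cong\Lambda$ is genuinely obtained from $\Gamma_p\cong\Lambda$. This works because right-angledness of $\Gamma_p$ upgrades the canonical simplicial identification with $\Gamma_{\mathrm{odd}}$ to a labeled one. The remaining point is that the valuation formula holds for every edge of $\Gamma_{\mathrm{odd}}$ simultaneously. The hard pro-$p$ input, namely the exclusion of edges labeled $2p^t$ with $t\ge1$, is already contained in Theorem~\ref{thm:pro-p-recognition-p-parts}, so no new obstacle arises.
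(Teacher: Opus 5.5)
Your proposal is correct and follows essentially the same route as the paper: $(1)\Rightarrow(2)$ via Theorem~\ref{thm:pro-p-recognition-p-parts} with $\Lambda_p=\Lambda$, and $(2)\Leftrightarrow(3)$ via Proposition~\ref{prop:odd-collapse-properties} and the label formula $2p^{v_p(d_\Gamma(C,D))}$. The return to $(1)$ via Theorem~\ref{thm:ELMP-pro-p} is also the same; the paper's $(3)\Rightarrow(1)$ is just your $(3)\Rightarrow(2)\Rightarrow(1)$ written as a single step.
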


\begin{proof}
$(1)\Longrightarrow (2)$: Suppose first that {\rm (1)} holds. Since $\Lambda$ is right-angled,
we have $\Lambda_p=\Lambda$ for every prime $p$. Hence by Theorem~\ref{thm:pro-p-recognition-p-parts}, we have
\[
\Gamma_p\cong\Lambda
\]
for every prime $p$. Thus {\rm (2)} holds.

$(2)\Longrightarrow (3)$: Now suppose that {\rm (2)} holds. Since $\Gamma_p$ is right-angled for every prime $p$, by
Proposition~\ref{prop:odd-collapse-properties}, the underlying
simplicial graph of $\Gamma_p$ is canonically isomorphic to
$\Gamma_{\mathrm{odd}}$. Therefore, $\Gamma_{\mathrm{odd}}\cong\Lambda$.

Let $\{C,D\}\in E(\Gamma_{\mathrm{odd}})$. Since the corresponding edge of $\Gamma_p$ has label 
\[
2p^{v_p(d_\Gamma(C,D))}=2,
\]
we have that
\[
v_p(d_\Gamma(C,D))=0
\]
for every prime $p$, i.e., $d_\Gamma(C,D)$ is not divisible by any prime numbers. Thus,
\[
d_\Gamma(C,D)=1.
\]

$(3)\Longrightarrow (1)$: Finally, if {\rm (3)} holds. Then every edge of $\Gamma_p$ has label $2$. Hence,
\[
\Gamma_p\cong\Gamma_{\mathrm{odd}}\cong\Lambda
\]
for every prime $p$. By Theorem~\ref{thm:ELMP-pro-p},
\[
\widehat{A_\Gamma}^{(p)}
\cong
\widehat{A_{\Gamma_p}}^{(p)}
\cong
\widehat{A_\Lambda}^{(p)}
\]
for every prime $p$. 
\end{proof}

The following examples illustrate a limitation of the information
detected by the collection of all pro-$p$ completions.

The simplest example is the braid group
\[
B_3=A_{I_2(3)}.
\]
Its odd-collapse graph consists of a single vertex, and hence Corollary~\ref{cor:all-pro-p-classification} gives
\[
\widehat{B_3}^{(p)}
\cong
\widehat{\ZZ}^{(p)}
\cong
\ZZ_p
\]
for every prime $p$. Nevertheless,
\[
\widehat{B_3}\not\cong\widehat{\ZZ},
\]
as $B_3$ admits the nonabelian finite quotient $S_3$, whereas every finite quotient of $\ZZ$ is cyclic.

A slightly less degenerate example is provided by the labeled triangle
with edge labels $3$, $4$, and $6$.

\begin{figure}[ht]
\centering
\resizebox{0.68\textwidth}{!}{%
\begin{tikzpicture}[
    >={Stealth[length=4pt,width=3.5pt]},
    font=\small,
    vertex/.style={
        circle,
        fill=black,
        inner sep=1.5pt
    },
    oddedge/.style={
        red!75!black,
        line width=0.8pt,
        line cap=round
    },
    evenedge/.style={
        blue!75!black,
        line width=0.8pt,
        line cap=round
    },
    collapseedge/.style={
        teal!70!black,
        line width=0.8pt,
        line cap=round
    },
    lab/.style={
        fill=white,
        inner sep=0.8pt,
        font=\small
    },
    every label/.style={
        font=\small,
        inner sep=1pt
    }
]

\begin{scope}

    \node[vertex,label=above:$a$] (a) at (0,2.4) {};
    \node[vertex,label=below:$b$] (b) at (0,0) {};
    \node[vertex,label=right:$c$] (c) at (3.1,1.2) {};

    \draw[oddedge]
        (a) -- (b)
        node[midway,left,lab] {$3$};

    \draw[evenedge]
        (a) -- (c)
        node[pos=.52,above,lab] {$4$};

    \draw[evenedge]
        (b) -- (c)
        node[pos=.52,below,lab] {$6$};

    \node at (1.45,-0.8) {$\Gamma$};

\end{scope}

\draw[->,line width=0.7pt]
    (4.15,1.2) -- (5.25,1.2);

\begin{scope}[shift={(6.5,0)}]

    \node[
        vertex,
        label={[label distance=2pt]above:$C=\{a,b\}$}
    ] (C) at (0,1.2) {};

    \node[
        vertex,
        label={[label distance=2pt]above:$D=\{c\}$}
    ] (D) at (2.6,1.2) {};

    \draw[collapseedge] (C) -- (D);

    \node at (1.3,-0.8) {$\Gamma_{\mathrm{odd}}$};

\end{scope}

\end{tikzpicture}%
}
\caption{The odd-collapse graph for the labeled triangle with edge
labels $3$, $4$, and $6$.}
\label{fig:346-odd-collapse}
\end{figure}

Here the odd components are
\[
C=\{a,b\},
\qquad
D=\{c\},
\]
and the edges joining them have half-labels $2$ and $3$. Hence
\[
d_\Gamma(C,D)=\gcd(2,3)=1.
\]
Since $\Gamma_{\mathrm{odd}}$ is a single edge, by Corollary~\ref{cor:all-pro-p-classification}, we have
\[
\widehat{A_\Gamma}^{(p)}
\cong
\widehat{\ZZ^2}^{(p)}
\]
for every prime $p$.

On the other hand,
\[
\widehat{A_\Gamma}\not\cong\widehat{\ZZ^2}.
\]
Indeed, the assignment
\[
a\mapsto(12),\qquad
b\mapsto(23),\qquad
c\mapsto1
\]
defines a surjection $A_\Gamma\twoheadrightarrow S_3$, whereas every
finite quotient of $\ZZ^2$ is abelian.

These examples show that the collection of all finite $p$-group
quotients, even when all primes are considered simultaneously, does
not in general detect the odd-labeled structure of an Artin graph.
Thus finite quotients beyond finite $p$-groups are needed to recover
the missing information.

\subsection{A finite-quotient obstruction}

\begin{lemma}[Finite-quotient obstruction]
\label{lem:finite-quotient-obstruction}
Let $G$ and $H$ be finitely generated groups, and let
$\pi\colon G\twoheadrightarrow H$ be an epimorphism. If
$\widehat G\cong\widehat H$, then every homomorphism from $G$ to a
finite group factors through $\pi$. Equivalently, $\ker(\pi)\subseteq\ker(i:G\to\widehat{G})$.
\end{lemma}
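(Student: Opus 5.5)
The approach is a counting argument with Proposition~\ref{prop:hom-finite-quotients}, comparing homomorphisms out of $G$ and out of $H$ into a fixed finite group.

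Fix a finite group $Q$. Precomposition with $\pi$ gives a map
\[
\pi^*\colon \Hom(H,Q)\to\Hom(G,Q),\qquad \psi\mapsto\psi\circ\pi .
\]
It is injective because $\pi$ is surjective. Its image is exactly the set of homomorphisms $G\to Q$ that factor through $\pi$. Indeed, if $\varphi$ kills $\ker\pi$, then $\varphi$ descends to $H\cong G/\ker\pi$.

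By Proposition~\ref{prop:hom-finite-quotients}, the isomorphism $\widehat G\cong\widehat H$ gives $|\Hom(G,Q)|=|\Hom(H,Q)|$. Finite generation of $G$ guarantees this number is finite: a homomorphism is determined by the images of finitely many generators, so there are at most $|Q|^n$ of them. An injection between finite sets of equal size is a bijection. Hence $\pi^*$ is surjective, and every homomorphism $G\to Q$ factors through $\pi$. Since $Q$ was arbitrary, this proves the first assertion.

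For the equivalent formulation, recall that $\ker(i\colon G\to\widehat G)$ is the intersection of all finite-index normal subgroups of $G$. Each such $N$ is the kernel of the quotient map $G\to G/N$ to a finite group. By the first part this map factors through $\pi$, so $\ker\pi\subseteq N$. Intersecting over all such $N$ gives $\ker\pi\subseteq\ker i$. Conversely, if $\ker\pi\subseteq\ker i$, then every homomorphism $\varphi$ to a finite group contains $\ker i$ in its kernel, since $\ker\varphi$ has finite index. So $\varphi$ kills $\ker\pi$ and factors through $\pi$.

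The main obstacle is just noticing that the isomorphism $\widehat G\cong\widehat H$ is abstract and need not be induced by $\pi$. The counting argument sidesteps this entirely. The only point needing care is finiteness of the Hom-sets, which is where finite generation of $G$ and $H$ enters.
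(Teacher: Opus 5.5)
Your proposal is correct and follows essentially the same route as the paper: the paper also shows that the pullback $\pi^*\colon\Hom(H,Q)\to\Hom(G,Q)$ is injective and uses Proposition~\ref{prop:hom-finite-quotients} to get equal finite cardinalities, so $\pi^*$ is bijective, and then intersects over all finite-index kernels. The one difference is that you also verify the converse direction of the ``equivalently'' clause, which the paper leaves implicit.
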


\begin{proof}
Let $Q$ be a finite group. Since $\pi$ is surjective, the pullback
\[
\begin{aligned}
\pi^*:\Hom(H,Q)
&\longrightarrow
\Hom(G,Q),\\
\psi
&\longmapsto
\psi\circ\pi
\end{aligned}
\]
is injective. Since $G$ and $H$ are finitely generated, $\Hom(G,Q)$ and $\Hom(H,Q)$ are finite and by Proposition~\ref{prop:hom-finite-quotients}, the isomorphism
$\widehat G\cong\widehat H$ implies
\[
|\Hom(G,Q)|=|\Hom(H,Q)|.
\]
Hence $\pi^*$ is also surjective, and every homomorphism $G\to Q$ factors
through $\pi$.

Since this holds for every finite group $Q$, each element of
$\ker(\pi)$ lies in the kernel of every homomorphism from $G$ to a finite group. Therefore, $\ker(\pi)\subseteq\ker(i:G\to\widehat{G})$.
\end{proof}

\begin{remark}\label{rem:no-compatibility}
The isomorphism $\widehat G\cong\widehat H$ in
Lemma~\ref{lem:finite-quotient-obstruction} need not be induced by, or compatible with, $\pi$. The proof uses only the cardinalities of the Hom sets.
\end{remark}

\begin{remark}\label{rem:hopfian-type}
Lemma~\ref{lem:finite-quotient-obstruction} may be viewed as a Hopfian-type statement. If $G$ is residually finite, then the canonical homomorphism $i:G\to\widehat G$ is injective, and hence the lemma implies that any epimorphism $\pi:G\twoheadrightarrow H$ between finitely generated groups satisfying
$\widehat G\cong\widehat H$ is an isomorphism. In particular, taking $H=G$ recovers the classical fact that every finitely generated residually finite group is Hopfian.

As we've mentioned before, for arbitrary Artin groups, residual finiteness is still not known in general. Thus, in the present setting, the useful conclusion of the lemma is the weaker statement
\[
\ker(\pi)\subseteq\ker(G\to\widehat G),
\]
namely that every element of $\ker(\pi)$ is invisible in all finite
quotients of $G$.
\end{remark}

\begin{proposition}\label{prop:no-odd-edges}
Let $A_\Gamma$ be an arbitrary Artin group and let $A_\Lambda$ be a right-angled Artin group. If $\widehat{A_\Gamma}\cong\widehat{A_\Lambda}$, then $\Gamma$ has no odd-labeled edges.
\end{proposition}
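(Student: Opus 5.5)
The plan follows the outline in the introduction. By Corollary~\ref{cor:recover-all-p-parts}, the hypothesis gives $\Gamma_p\cong\Lambda$ for every prime $p$. Proposition~\ref{prop:odd-collapse-properties}(1) then identifies $\Lambda$ with $\Gamma_{\mathrm{odd}}$ as labeled graphs, since every $\Gamma_p$ is right-angled. Hence $\widehat{A_{\Gamma_{\mathrm{odd}}}}\cong\widehat{A_\Lambda}\cong\widehat{A_\Gamma}$.

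Next I apply Lemma~\ref{lem:finite-quotient-obstruction} to the canonical epimorphism $\pi_\Gamma\colon A_\Gamma\twoheadrightarrow A_{\Gamma_{\mathrm{odd}}}$ from Proposition~\ref{prop:odd-collapse-properties}(2); both groups are finitely generated. The lemma shows that every homomorphism from $A_\Gamma$ to a finite group factors through $\pi_\Gamma$.

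Now suppose, towards a contradiction, that $\{u,v\}\in E(\Gamma)$ has odd label $m$. Then $u\sim_{\mathrm{odd}}v$, so $\pi_\Gamma(u)=\pi_\Gamma(v)$, and hence $\pi_\Gamma(uv^{-1})=1$. Consequently every finite quotient of $A_\Gamma$ must identify the images of $u$ and $v$. To contradict this, compose the canonical epimorphism $A_\Gamma\to W_\Gamma$ with a finite quotient of $W_\Gamma$ in which $u\neq v$. The simplest choice is the standard parabolic $W_{\{u,v\}}$, the dihedral group of order $2m$. However, a retraction of $W_\Gamma$ onto it need not exist when there are odd edges, so I would instead use residual finiteness of $W_\Gamma$, which holds by Mal'cev's theorem since Coxeter groups are linear. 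The required fact is that $uv\neq 1$ in $W_\Gamma$, which follows from the standard fact that $W_{\{u,v\}}$ embeds in $W_\Gamma$ as the dihedral group of order $2m$ with $u\neq v$; alternatively one can cite that distinct standard generators of a Coxeter group are distinct elements. Residual finiteness then gives a finite group $Q$ and a map $W_\Gamma\to Q$ with $u\mapsto$ and $v\mapsto$ distinct elements. The composite $A_\Gamma\to Q$ does not factor through $\pi_\Gamma$, which is the desired contradiction.

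The main obstacle is justifying that $u\neq v$ in some finite quotient of $A_\Gamma$ without relying on residual finiteness of $A_\Gamma$. Routing the argument through $W_\Gamma$, and using that distinct vertices give distinct elements of $W_\Gamma$ (from the geometric representation or the parabolic embedding), handles this cleanly.
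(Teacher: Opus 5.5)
Your proposal is correct and follows essentially the same route as the paper. You identify $\Gamma_{\mathrm{odd}}\cong\Lambda$ from the pro-$p$ data, conclude via Lemma~\ref{lem:finite-quotient-obstruction} that every finite quotient factors through $\pi_\Gamma$, and then separate the endpoints of an odd edge in a finite quotient of $W_\Gamma$ using the dihedral parabolic and residual finiteness. The only differences are cosmetic: you derive $\Gamma_{\mathrm{odd}}\cong\Lambda$ from Corollary~\ref{cor:recover-all-p-parts} and Proposition~\ref{prop:odd-collapse-properties}(1) rather than citing Corollary~\ref{cor:all-pro-p-classification}, and you invoke Lemma~\ref{lem:finite-quotient-obstruction} explicitly where the paper leaves it implicit.
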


\begin{proof}
By Corollary~\ref{cor:all-pro-p-classification},
$\Gamma_{\mathrm{odd}}\cong\Lambda$, so
$A_{\Gamma_{\mathrm{odd}}}\cong A_\Lambda$, and thus
\[
\widehat{A_\Gamma}
\cong
\widehat{A_\Lambda}
\cong
\widehat{A_{\Gamma_{\mathrm{odd}}}}.
\]
Let  $\pi_\Gamma\colon
A_\Gamma\twoheadrightarrow A_{\Gamma_{\mathrm{odd}}}$ be the canonical epimorphism  from Proposition~\ref{prop:odd-collapse-properties}. Assume that there exists an odd-labeled edge $\{u,v\}\in E(\Gamma)$. Then
\[
\pi_\Gamma(u)=\pi_\Gamma(v). 
\]

On the other hand, let
$q_\Gamma\colon A_\Gamma\twoheadrightarrow W_\Gamma$ be the canonical quotient map to the corresponding Coxeter group. If the label of $\{u,v\}$ is $m$, then the standard rank-two parabolic subgroup generated by the images of $u$ and $v$ is the dihedral Coxeter group of order $2m$; see \cite[Theorem~4.1.6]{Dav08}. In particular, the images of $u$ and $v$ in $W_\Gamma$ are distinct.

Since Coxeter groups are residually finite, there exist a finite group $Q$ and a homomorphism $\rho\colon W_\Gamma\to Q$ such that
\[
\rho(q_\Gamma(u))
\neq
\rho(q_\Gamma(v)).
\]
Thus the composite $\varphi=\rho\circ q_\Gamma$ is a homomorphism $A_\Gamma\to Q$ that does not factor through $\pi_\Gamma$,
a contradiction. Therefore, $\Gamma$ has no odd-labeled edges.
\end{proof}

\section{Relative profinite rigidity}
\label{sec:relative-profinite-rigidity}

We can now prove the main theorem.
\begin{theorem}\label{thm:profinite-rigidity}
Let $A_\Gamma$ be an Artin group and let $A_\Lambda$ be a right-angled Artin group. If $\widehat{A_\Gamma}\cong\widehat{A_\Lambda},$
then $\Gamma$ is right-angled and $\Gamma\cong\Lambda$. In particular, $A_\Gamma\cong A_\Lambda$. Consequently, right-angled Artin groups are profinitely rigid relative to the class of Artin groups.
\end{theorem}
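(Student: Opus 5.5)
The plan is to assemble the results already established. First apply Proposition~\ref{prop:no-odd-edges}: since $\widehat{A_\Gamma}\cong\widehat{A_\Lambda}$ with $A_\Lambda$ right-angled, $\Gamma$ has no odd-labeled edges, so $A_\Gamma$ is an even Artin group. Next, Lemma~\ref{lem:maximal-pro-p-quotient} turns the profinite isomorphism into isomorphisms $\widehat{A_\Gamma}^{(p)}\cong\widehat{A_\Lambda}^{(p)}$ for every prime $p$. This is exactly the hypothesis of Corollary~\ref{cor:even-all-pro-p-rigidity}, which applies to even Artin groups. That corollary gives that $\Gamma$ is right-angled and $\Gamma\cong\Lambda$ as labeled graphs.

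It may be cleaner to spell out the last step directly. Because $\Gamma$ is even, $\sim_{\mathrm{odd}}$ is trivial. Hence $\Gamma_{\mathrm{odd}}$ is the underlying simplicial graph of $\Gamma$, and for each edge $\{u,v\}$ of label $2m$ we have $d_\Gamma(\{u\},\{v\})=m$, since no parallel edges arise in a simplicial graph. Corollary~\ref{cor:all-pro-p-classification} (via Corollary~\ref{cor:recover-all-p-parts}) forces $d_\Gamma=1$ on every edge. So every label equals $2$, and $\Gamma=\Gamma_{\mathrm{odd}}\cong\Lambda$.

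The statement $A_\Gamma\cong A_\Lambda$ then follows because isomorphic labeled graphs define isomorphic Artin groups. Relative profinite rigidity is a restatement: given a RAAG $A_\Lambda$ and any Artin group $A_\Gamma$ with $\cC(A_\Gamma)=\cC(A_\Lambda)$, Theorem~\ref{thm:finite-quotients-completion} gives $\widehat{A_\Gamma}\cong\widehat{A_\Lambda}$, and we conclude as above. Both groups are finitely generated, so that theorem applies.

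I expect no real obstacle here, since the heavy lifting (the pro-$p$ obstruction and the finite-quotient argument excluding odd labels) has already been done. The only point needing care is the order of the steps: first exclude odd edges using genuinely profinite information, and only then use pro-$p$ information. Before odd edges are excluded, the $p$-parts alone cannot see odd labels, as the $B_3$ example shows.
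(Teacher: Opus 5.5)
Your proposal is correct and is essentially the paper's proof: Proposition~\ref{prop:no-odd-edges} makes $\Gamma$ even, Lemma~\ref{lem:maximal-pro-p-quotient} passes to all pro-$p$ completions, and Corollary~\ref{cor:even-all-pro-p-rigidity} finishes. Your unpacking of the last step via $d_\Gamma$ and Corollary~\ref{cor:all-pro-p-classification} is just an equivalent restatement of that corollary, and your treatment of the relative-rigidity consequence matches the paper's remark about interchanging the two groups.
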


\begin{proof}
By Proposition~\ref{prop:no-odd-edges}, $\Gamma$ has no odd-labeled edges, and hence $A_\Gamma$ is even. Moreover, by Lemma~\ref{lem:maximal-pro-p-quotient}, we have
\[
\widehat{A_\Gamma}^{(p)}
\cong
\widehat{A_\Lambda}^{(p)}
\]
for every prime $p$. Then Corollary~\ref{cor:even-all-pro-p-rigidity}
gives $\Gamma\cong\Lambda,$ and in particular $\Gamma$ is right-angled. Hence $A_\Gamma\cong A_\Lambda$.

The final assertion follows by interchanging the two Artin groups if necessary.
\end{proof}

Thus the profinite completion not only detects right-angledness within
the class of Artin groups, but also determines the defining graph.
In particular, if $G$ is a finitely generated group satisfying
\[
\widehat G\cong\widehat{A_\Lambda}
\]
for some RAAG $A_\Lambda$, but $G\not\cong A_\Lambda$, then $G$ is
not an Artin group.

\begin{remark}[Grothendieck rigidity and absolute flexibility]
\label{rem:absolute-flexibility}
The conclusion of Theorem~\ref{thm:profinite-rigidity} is necessarily
relative. As recalled in the introduction, Platonov--Tavgen'
~\cite{PT86} considered an epimorphism
\[
q:F_4\twoheadrightarrow Q,
\]
where $Q$ is Higman's group
\[
Q=\left\langle a,b,c,d\ \middle| \
a^{-1}ba=b^2,\;
b^{-1}cb=c^2,\;
c^{-1}dc=d^2,\;
d^{-1}ad=a^2
\right\rangle,
\]
and formed the associated fibre product
\[
P=\{(x,y)\in F_4\times F_4 : q(x)=q(y)\}.
\]
The group $Q$ has no nontrivial finite quotients and satisfies
$H_2(Q,\ZZ)=0$, and the inclusion
\[
P\hookrightarrow F_4\times F_4
\]
induces an isomorphism
\[
\widehat P\cong\widehat{F_4\times F_4}.
\]
Moreover, $P$ is finitely generated and is not isomorphic to
$F_4\times F_4$.

Since
\[
A_{K_{4,4}}\cong F_4\times F_4,
\]
where $K_{4,4}$ denotes the complete bipartite graph with two parts
of size $4$, the ambient group in the Platonov--Tavgen' construction
is itself a RAAG. Thus RAAGs need not be Grothendieck rigid or
absolutely profinitely rigid.

More recently, Corson--Hughes--M\"oller--Varghese proved that
$F_2\times F_2$ has infinite profinite genus and constructed
profinitely flexible RAAGs admitting no nontrivial free or direct
product decomposition; see~\cite[Corollaries~5.10 and~5.15]{CHMV26}.
\end{remark}

\bibliographystyle{amsalpha-author}
\bibliography{references}

@article{Droms87,
    author  = {Droms, Carl},
    title   = {Isomorphisms of Graph Groups},
    journal = {Proceedings of the American Mathematical Society},
    volume  = {100},
    number  = {3},
    year    = {1987},
    pages   = {407--408},
    doi     = {10.1090/S0002-9939-1987-0891135-8}
}

@article{KW16,
    author  = {Kropholler, Robert and Wilkes, Gareth},
    title   = {Profinite Properties of {RAAG}s and Special Groups},
    journal = {Bulletin of the London Mathematical Society},
    volume  = {48},
    number  = {6},
    year    = {2016},
    pages   = {1001--1007},
    doi     = {10.1112/blms/bdw056}
}

@article{CPZ25,
    author  = {Casals-Ruiz, Montserrat and Pintonello, Matteo and
               Zalesskii, Pavel},
    title   = {Pro-$\mathcal{C}$ {RAAG}s},
    journal = {Journal of Algebra},
    volume  = {664},
    year    = {2025},
    pages   = {177--208},
    doi     = {10.1016/j.jalgebra.2024.09.030}
}

@misc{ELMP26,
    author        = {Escart{\'i}n Ferrer, Marcos and Leoni, Giorgio and
                     Mart{\'i}nez P{\'e}rez, Conchita},
    title         = {Cohomology Rings and $p$-Local Behavior of Even
                     {Artin} Groups},
    year          = {2026},
    eprint         = {2606.30558},
    archivePrefix = {arXiv},
    primaryClass  = {math.GR},
    note          = {arXiv:2606.30558v1}
}

@book{Dav08,
    author    = {Davis, Michael W.},
    title     = {The Geometry and Topology of Coxeter Groups},
    series    = {London Mathematical Society Monographs Series},
    volume    = {32},
    publisher = {Princeton University Press},
    address   = {Princeton, NJ},
    year      = {2008},
    isbn      = {978-0-691-13138-2}
}

@article{BGP22,
    author  = {Blasco-Garc{\'i}a, Rub{\'e}n and Paris, Luis},
    title   = {On the Isomorphism Problem for Even {Artin} Groups},
    journal = {Journal of Algebra},
    volume  = {607},
    year    = {2022},
    pages   = {35--52},
    doi     = {10.1016/j.jalgebra.2020.05.025}
}

@article{Mal65,
    author  = {Mal'cev, A. I.},
    title   = {On the Faithful Representation of Infinite Groups by Matrices},
    journal = {American Mathematical Society Translations, Series 2},
    volume  = {45},
    year    = {1965},
    pages   = {1--18}
}

@article{CW02,
    author  = {Cohen, Arjeh M. and Wales, David B.},
    title   = {Linearity of {Artin} Groups of Finite Type},
    journal = {Israel Journal of Mathematics},
    volume  = {131},
    year    = {2002},
    pages   = {101--123},
    doi     = {10.1007/BF02785852}
}

@article{Digne03,
    author  = {Digne, Fran{\c{c}}ois},
    title   = {On the Linearity of {Artin} Braid Groups},
    journal = {Journal of Algebra},
    volume  = {268},
    number  = {1},
    year    = {2003},
    pages   = {39--57},
    doi     = {10.1016/S0021-8693(03)00327-2}
}

@article{BGMPP19,
    author  = {Blasco-Garc{\'i}a, Rub{\'e}n and
               Mart{\'i}nez-P{\'e}rez, Conchita and Paris, Luis},
    title   = {Poly-freeness of Even {Artin} Groups of {FC} Type},
    journal = {Groups, Geometry, and Dynamics},
    volume  = {13},
    number  = {1},
    year    = {2019},
    pages   = {309--325}
}

@article{Jan22,
  author  = {Jankiewicz, Kasia},
  title   = {Residual finiteness of certain 2-dimensional Artin groups},
  journal = {Advances in Mathematics},
  volume  = {405},
  year    = {2022},
  pages   = {108487},
  doi     = {10.1016/j.aim.2022.108487}
}

@article{DFPR82,
    author  = {Dixon, John D. and Formanek, Edward W. and
               Poland, John C. and Ribes, Luis},
    title   = {Profinite Completions and Isomorphic Finite Quotients},
    journal = {Journal of Pure and Applied Algebra},
    volume  = {23},
    number  = {3},
    year    = {1982},
    pages   = {227--231},
    doi     = {10.1016/0022-4049(82)90098-6}
}

@article{NS07a,
    author  = {Nikolov, Nikolay and Segal, Dan},
    title   = {On Finitely Generated Profinite Groups, I:
               Strong Completeness and Uniform Bounds},
    journal = {Annals of Mathematics},
    series  = {2},
    volume  = {165},
    number  = {1},
    year    = {2007},
    pages   = {171--238},
    doi     = {10.4007/annals.2007.165.171}
}

@article{NS07b,
    author  = {Nikolov, Nikolay and Segal, Dan},
    title   = {On Finitely Generated Profinite Groups, {II}:
               Products in Quasisimple Groups},
    journal = {Annals of Mathematics},
    series  = {2},
    volume  = {165},
    number  = {1},
    year    = {2007},
    pages   = {239--273},
    doi     = {10.4007/annals.2007.165.239},
    mrnumber = {2276770}
}

@phdthesis{VdL83,
    author  = {van der Lek, Harm},
    title   = {The Homotopy Type of Complex Hyperplane Complements},
    school  = {Katholieke Universiteit Nijmegen},
    address = {Nijmegen},
    year    = {1983}
}

@article{Paris97,
    author  = {Paris, Luis},
    title   = {Parabolic Subgroups of {Artin} Groups},
    journal = {Journal of Algebra},
    volume  = {196},
    number  = {2},
    year    = {1997},
    pages   = {369--399},
    doi     = {10.1006/jabr.1997.7098}
}

@book{RZ10,
    author    = {Ribes, Luis and Zalesskii, Pavel},
    title     = {Profinite Groups},
    edition   = {2},
    series    = {Ergebnisse der Mathematik und ihrer Grenzgebiete.
                 3. Folge},
    volume    = {40},
    publisher = {Springer},
    address   = {Berlin},
    year      = {2010},
    doi       = {10.1007/978-3-642-01642-4},
    isbn      = {978-3-642-01641-7}
}

@article{CHMV26,
    author  = {Corson, Samuel M. and Hughes, Sam and
               M{\"o}ller, Philip and Varghese, Olga},
    title   = {Higman--{Thompson} Groups and Profinite Properties of Right-Angled Coxeter Groups},
    journal = {Selecta Mathematica (New Series)},
    volume  = {32},
    year    = {2026},
    number  = {2},
    pages   = {28},
    doi     = {10.1007/s00029-026-01130-4}
}

@article{PT86,
  author  = {Platonov, V. P. and Tavgen', O. I.},
  title   = {Grothendieck's problem on profinite completions of groups},
  journal = {Soviet Mathematics Doklady},
  volume  = {33},
  pages   = {822--825},
  year    = {1986}
}

@misc{NB26,
      title={Braid groups are not profinitely rigid}, 
      author={Carl-Fredrik Nyberg-Brodda},
      year={2026},
      eprint={2607.20859},
      archivePrefix={arXiv},
      primaryClass={math.GR},
      url={https://arxiv.org/abs/2607.20859}, 
      note= {arXiv:2607.20859v1}
}

@article{Mol18,
  author  = {Moldavanskii, David},
  title   = {On the residual properties of Baumslag--Solitar groups},
  journal = {Communications in Algebra},
  volume  = {46},
  number  = {9},
  pages   = {3766--3778},
  year    = {2018},
  doi     = {10.1080/00927872.2018.1424867}
}

@article{Hum94,
  author  = {Humphries, Stephen P.},
  title   = {On representations of Artin groups and the Tits conjecture},
  journal = {Journal of Algebra},
  volume  = {169},
  number  = {3},
  pages   = {847--862},
  year    = {1994},
  doi     = {10.1006/jabr.1994.1312}
}

@article{HW99,
  author  = {Hsu, Tim and Wise, Daniel T.},
  title   = {On linear and residual properties of graph products},
  journal = {Michigan Mathematical Journal},
  volume  = {46},
  number  = {2},
  pages   = {251--259},
  year    = {1999},
  doi     = {10.1307/mmj/1030132408}
}

\end{document}